\documentclass[10pt,a4paper]{amsart}
\usepackage[pdftex]{graphicx} 
\usepackage[usenames,dvipsnames]{xcolor}
\usepackage{hhline}
\usepackage[english]{babel} 
\usepackage[centering,margin=2.5cm]{geometry}
\usepackage{upgreek}
\usepackage[font=small,labelfont=bf]{caption}
\usepackage{amsfonts}
\usepackage{tensor} 
\usepackage{lmodern}
\usepackage{verbatim}
\usepackage{amsmath}
\usepackage[all, cmtip]{xy}\usepackage{amssymb}
\usepackage{amsthm}
\usepackage{amsbsy}
\usepackage{extpfeil}
\usepackage{bbm}
\usepackage{paralist}
\usepackage[colorlinks,citecolor=magenta,pagebackref=true,urlcolor=magenta,
pdftex]{hyperref}
\usepackage{cancel}

\usepackage{soul}

\usepackage[T1]{fontenc}

\usepackage{nicefrac}
\usepackage{microtype}
\usepackage{mathrsfs}
\usepackage[nodisplayskipstretch]{setspace}
\setstretch{1.3}

\setlength{\oddsidemargin}{-0.2in}
\setlength{\evensidemargin}{-0.2in}
\setlength{\marginparwidth}{0in}
\setlength{\marginparsep}{0in}
\setlength{\marginparpush}{0in}
\setlength{\topmargin}{0.1in}
\setlength{\headsep}{14pt}
\setlength{\footskip}{.3in}
\setlength{\textheight}{9.0in}
\setlength{\textwidth}{6.8in}
\setlength{\parskip}{4pt}

\theoremstyle{plain}
\newtheorem{thm}{Theorem}
\newtheorem*{thm*}{Theorem}

\newtheorem{prop}{Proposition}
\newtheorem{lem}{Lemma}

\newtheorem{ques}{Question}

\newtheorem*{prb*}{Problem}

\theoremstyle{definition}
\newtheorem{rem}{Remark}
\newtheorem{df}{Definition}

\theoremstyle{remark}


\newcommand{\St}{\mr{st}}
\newcommand{\Lk}{\mr{lk}}

\newcommand{\cm}[1]{}

\newcommand\mr[1]{\mathrm{#1}}

\newcommand{\R}{\mathbb{R}}
\newcommand{\Z}{\mathbb{Z}}



\title{CAT(0) metrics on contractible manifolds}
\author{Karim A.~Adiprasito}
\address{Einstein Institute for Mathematics, Hebrew University of Jerusalem, Jerusalem, 91904 Israel}
\email{adiprasito@math.huji.ac.il}

\author{Louis Funar}
\address{Institut Fourier, UMR 5582, Laboratoire de Math\'ematiques
Universit\'e Grenoble Alpes, CS 40700, 38058 Grenoble cedex 9, France}
\email{louis.funar@univ-grenoble-alpes.fr}

\date{\today}
\thanks{K.~Adiprasito was supported by an EPDI/IPDE postdoctoral fellowship, a Minerva fellowship of the Max Planck Society and NSF
Grant DMS 1128155, L. Funar was partially supported by
the ANR 2011 BS 01 020 01 ModGroup.}

\keywords{}
\subjclass[2010]{57N16, 51K10, 57N15.}


\begin{document}

\begin{abstract}
We prove that an open manifold $M$ of dimension at least $5$ which admits a 
complete CAT(0) polyhedral metric is pseudo-collarable, its fundamental group at infinity is strongly perfectly semistable and has vanishing Chapman-Siebenmann obstruction $\tau_{\infty}(M)$.
Moreover, this implies that $M$ is topologically collapsible, when $n\geq 6$. 
Conversely, any finite dimensional collapsible polyhedron is PL homeomorphic to a CAT(0) cubical complex.  
\end{abstract}
\maketitle

\section{Introduction}

\vspace{0.2cm}\noindent 
\subsection{Context}
The Cartan--Hadamard theorem in Riemannian geometry can be accentuated in two parts: 
\begin{compactenum}[(1)]
\item Nonpositive sectional curvature (a local condition) together with simple connectivity implies global nonpositive curvature (i.e. Alexandrov's CAT(0) condition).
\item The Riemannian manifold in question is in particular diffeomorphic to an Euclidean space.
\end{compactenum}

With the increase of interest in Alexandrov's coarse curvature notions (motivated chiefly by the work of Burago, Perelman, Shioya, Gromov and others) it was noticed that while the first part holds quite generally for metric length spaces \cite{BBI}, the second part of the Cartan--Hadamard seemed to break down in the topological and polyhedral categories. When revitalizing the interest in CAT(0) geometry for his work on hyperbolic groups, Gromov therefore prominently asked in the eighties for other open manifolds which can be endowed with complete CAT(0) metrics.

Gromov also noticed that this question should be asked for geodesically complete metrics (an assumption we restrict to throughout), as every manifold with boundary can be given a smooth non-complete metric of curvature $<0$ (and also a metric of curvature $>0$) using the h-principle. In this setting, a CAT(0) manifold is necessarily contractible.

A first answer to this question was provided by Davis and Januszkiewicz \cite{DJ}, who proved the existence of nontrivial CAT(0) $n$-manifolds, $n\geq 5$, using Gromov's own hyperbolization construction, combined with the Cannon--Edwards criterion. Soon after, Ancel and Guilbault \cite{AG} extended the picture by showing that the interior of any compact contractible manifold of dimension $n\geq 5$ can be given a complete CAT(-1) geodesic metric.

On the other hand, already examples of CAT(0) manifolds constructed by Davis and Januszkiewicz have fundamental groups at infinity not stable, and are therefore not compactifiable, giving us two disjoint sources for CAT(0) manifolds. A complete understanding of CAT(0) manifolds remained elusive.

\subsection{CAT(0) metrics}
It is understood that throughout this paper any CAT(0) metric which we define on a given topological space has the property 
that the metric and usual topologies agree.  

Recall that a metric space $(X,d)$ is  {\em geodesic} (also called a length space or an inner metric space) if every two points 
of it can be joined by a minimizing geodesic, namely a curve whose length equals the distance between the points. 
The length of the continuous path $\gamma:[0,1]\to X$ is defined as 
\[ \sup_{r, 0=t_0 <t_1 <\cdots t_r<t_{r+1}=1}\; \; \; \;  \sum_{j=0}^{r} d(\gamma(t_j),\gamma(t_{j+1}) \]
A geodesic triangle in $(X,d)$ satisfies the CAT($\kappa$) inequality if the geodesic comparison triangle 
with sides of the same length within  the simply connected curvature $\kappa$ Riemannian surface has 
distances between pairs of boundary points larger than those between corresponding pairs of points in the initial triangle. 
Moreover, the geodesic metric space $(X,d)$ is CAT(0) if  every geodesic triangle, which for $\kappa >0$ has perimeter less than $\frac{2\pi}{\sqrt{\kappa}}$, satisfies the CAT($\kappa$) inequality.

It is well-known that a simply connected CAT(0) space is contractible. 
Conversely, we would like to know 
which contractible spaces admit CAT(0) metrics. In this paper we will only consider this question for manifolds instead of arbitrary spaces and more specifically, {\em contractible} topological manifolds $M$ of dimension $n\geq 5$.

Classical results show that every contractible manifold $M$ is triangulable, namely there exists a {\em locally finite simplicial complex} $\Delta$ homeomorphic to $M$. 
The CAT($\kappa$) metrics which we consider on $M$ are supposed to be {\em polyhedral}, namely there exists a suitable 
triangulation $\Delta$ such that every cell of $\Delta$ when equipped with the induced metric  
is isometric to the convex hull of a finite set of points in the hyperbolic or Euclidean 
space of curvature $\kappa\leq 0$ (see \cite{BH}, I.7, Def. 7.37). The {\em piecewise
 flat equilateral metric}  
 is the length metric obtained when 
simplices or cubes are Euclidean and have all their edges of the same (unit) length. 
 We shall point out that our arguments are more general, 
it suffices that the restriction of the metric to every cell of $\Delta$ be a  
piecewise smooth Riemannian metric whose curvature is bounded above and below by two constants independent on the 
cell.

\subsection{Pseudo-collarability}
The goal of this note is to give a topological characterization of CAT(0)  manifolds for dimensions $\ge 6$. The key notion was introduced by Guilbault in \cite{Guilb}:

\begin{df}
An open manifold $M$ is {\em pseudo-collarable} if it admits an exhaustion $M=\cup_{j=1}^{\infty} M_j$ by compact manifolds $M_j$ such that $M_j\subset {\rm int}(M_{j+1})$ and 
the inclusion $\partial M_{j}\hookrightarrow M-{\rm int}(M_{j})$ is a  homotopy equivalence, for every $j\geq 1$.
\end{df}

Note that there exist open contractible $n$-manifolds which are not pseudo-collarable, for every $n\geq 5$. 

\subsection{The Chapman-Siebenmann obstruction $\tau_{\infty}$ for pseudo-collarable manifolds}
The obstruction $\tau_{\infty}$ was defined by Siebenmann in \cite{Sie3} and Chapman and Siebenmann in \cite{CS}. 
\begin{df}
Let $\varepsilon(M)$ denote the end of a one ended manifold $M$ and $U(\varepsilon(M))$ be a system 
of open  neighborhoods of infinity, namely having compact complement. 
The {\em attenuation} of the Whitehead functor is: 
\[{\rm Wh}^1(\varepsilon(M))=\lim_{\leftarrow}{}^1 \; ({\rm Wh}(\pi_1(U)))_{U\in U(\varepsilon(M))}\] where  $\lim^1$ denotes the first derived limit and ${\rm Wh}(\pi_1(K))$ denotes the  Whitehead group 
of the fundamental group $\pi_1(K)$. 

Let $M$ be an open contractible pseudo-collarable manifold. 
Consider a compact manifold exhaustion $M_i\subset {\rm int}(M_{i+1})$ of $M$, 
such that  the  inclusion map $\partial M_i \to M_{i+1}-{\rm int}(M_i)$ is a homotopy equivalence. 
Then the Chapman-Siebenmann obstruction $\tau_{\infty}(M)\in {\rm Wh}^1(\varepsilon(M))$ is 
the image of the sequence $(\tau_i)$, where 
$\tau_i$ is the image of the Whitehead torsion 
$\tau(M_{i+1}-{\rm int}(M_i),\partial M_i)\in {\rm Wh}(\pi_1(M_{i+1}))$ into 
${\rm Wh}(\pi_1(M-{\rm int}(M_i)))$.  See section \ref{obstructions} for details. 
\end{df}

\subsection{Collapsibility}
Complete CAT(0) metrics are strongly convex, namely there is an unique midpoint associated to any two points of the space.
Rolfsen proved in \cite{R1,R2} that the only compact manifolds of dimension $n\leq 3$ admitting a 
strongly convex metric are homeomorphic to the ball. This implies that  complete open  CAT(0) manifolds of dimension $n\leq 3$ are homeomorphic to $\R^n$. Thus Whitehead 3-manifolds 
cannot be endowed with complete CAT(0) metrics. 

If one looks more generally upon complexes instead of manifolds 
White (see \cite{W}) proved that a 2-complex admits a strongly convex metric if and only if it is collapsible. 
As such the result cannot be extended to higher dimensions, as there exist (non rectilinear) triangulations of the 
3-cell which are not collapsible (see \cite{Bi}).

A triangulation of some $n$-manifold 
is {\em PL} if the link of every vertex is PL homeomorphic to a $(n-1)$-sphere.    
A simply connected manifold $M$ endowed with a  PL  triangulation for which 
the associated polyhedral (piecewise flat or hyperbolic)  metric is CAT(0) is homeomorphic to $\R^n$, by 
a classical theorem of Stone (\cite{Stone}).  Thus  CAT(0) polyhedral metrics on exotic manifolds are subjacent to non PL triangulations. 

Consider two polyhedra $X$ and $Y$ such that 
$X=Y\cup e$, where $e$ is a cell whose boundary 
$\partial e$ intersects  $Y$ along the complement of a single facet.
We say that $X$ {\em elementary collapses} on $Y$, or $X$ is an {\em elementary expansion} of $Y$. 
Moreover, a collapse or an expansion is a sequence of elementary collapses or expansions, respectively.

A topological manifold will be called {\em PL collapsible} in the sequel if it admits a 
triangulation which is collapsible, i.e. it collapses to a point. In the noncompact case 
this means that dually, we can obtain it from a point by infinitely many expansions.  
We emphasize that this triangulation is {\em not} required to be a PL triangulation.

Collapsibility extends readily to CW-complexes (see \cite{Cohen}). Let 
$X$ and $Y$ be CW complexes such that $X=Y\cup_{\varphi} e$, where $e$ is a cell attached to $Y$ 
by means of some attaching map $\varphi:\partial e -f\to Y$ in the complement of a codimension one cell $f$. We say again that $X$ {\em elementary collapses} on $Y$, or $X$ is an 
{\em elementary expansion} of $Y$. Moreover, a collapse of CW complexes is a sequence of elementary expansions. If the CW complex is regular, namely all attaching maps of its cells are embeddings, 
then the complex is triangulable and the two notions of collapsibility agrees. 
This also holds when $X,Y$ are polyhedra while the attaching maps are PL. 

However we can consider general topological spaces $X$ and $Y$ and arbitrary continuous 
attaching maps in the situation above. We then say that we have a {\em topological (elementary) collapse} or {\em expansion} (see \cite{BM}). 

Recall that the {\em topological mapping cylinder}  $M(f)$  of a {\em continuous} map 
$f:M\to N$ is  given by $M\times [0,1]\cup_{(x,1)\sim f(x)} N$.  Note that $M(f)$ topologically collapses on $N$ although the collapse might not be a PL collapse if $f$ were not a PL map.

It is known (see \cite{BCC})  that there exists polyhedra which are homeomorphic to balls and hence 
topologically collapsible but which are not PL collapsible. Specifically, if  $\Sigma$ is 
a PL homology $n$-sphere with nontrivial fundamental group and $M$ is the complement of a 
PL ball embedded into the join $S^p*\Sigma$ (equivalently the $p$-th iterated suspension of $\Sigma$) whose closure is disjoint from $S^p$, then 
$M$ is not PL collapsible, although for $p\geq 1$ the double suspension theorem of Cannon 
implies that $M$ is homeomorphic to a ball.

\subsection{The  main results}
\begin{thm}\label{thm:CAT1}

An open contractible $n$-manifold $M$, $n\geq 5$ which admits a CAT(0) complete polyhedral metric is pseudo-collarable, it has strongly perfectly semistable fundamental group at infinity and 
vanishing Chapman-Siebenmann obstruction $\tau_{\infty}(M)$. 
Moreover, if $n\geq 6$, then $M$ is topologically collapsible. 
\end{thm}

\begin{thm}\label{thm:CAT2}
A finite dimensional PL collapsible polyhedral complex  is PL homeomorphic to 
a cubical complex which is CAT(0) when endowed with the piecewise flat equilateral metric.  
\end{thm}

The fact that a cubical complex which is CAT(0) with respect to the equilateral 
piecewise flat metric is indeed collapsible was established in \cite{AB}.

As an immediate consequence interiors of compact contractible manifolds are homeomorphic to CAT(0) cubical complexes, when $n\geq 4$. This should be compared with \cite{AG}, where Ancel and Guilbault proved that there are CAT(-1) metrics on these manifolds.   

In essence, pseudo-collarability guarantees an exhaustion by contractible manifolds as well as a sufficiently "nice" structure at infinity (cf.\  Lemma \ref{psdc}). A simpler notion is the notion of "geometrically contractible manifolds", which abandons the structure at infinity and describes open contractible manifolds that can be exhausted by compact contractible manifolds.
 Section \ref{variations}, which explores different topological notions related to pseudo-collarability, reveals a hierarchy:
\[
\text{compactifiable} \ \subsetneq\ \text{pseudo-collarable} \ \subsetneq\ \text{geometrically contractible} \ \subsetneq\ \text{general}. 
\]

\subsection{Comments and questions}

We formulate here two questions, for which affirmative answers might bridge the gap between 
the two main theorems. 

\begin{ques}\label{plcollapse}
Consider a  compact PL $n$-manifold $W$ with boundary, $n\geq 5$, which is homeomorphic to the 
topological mapping cylinder $M(f)$ of an acyclic map $f:M\to N$ between closed PL 
homology spheres.  
Suppose that the kernel of the map induced by $f$ at fundamental groups level is the normal closure of  
a finitely generated perfect group. Then, is the pair $(W,N)$ homeomorphic to a 
polyhedron pair $(P,Q)$ such that $P$ PL collapses onto $Q$?  
\end{ques}

In other words we ask whether any topological collapse between PL manifolds as in the statement  
can be made PL if we accept to replace the initial triangulations with non PL triangulations. 
A particular case of this question asks whether a PL manifold which topologically collapses to a point is homeomorphic to a  
polyhedron which PL collapses. This is known to hold by the construction of 
an arc spine for any compact contractible manifold $M$ by Ancel and Guilbault in \cite{AG0}. 
Specifically, if $n\geq 5$ and $M$ is a compact contractible $n$-manifold, then there exists 
a map $f:\partial M\to [0,1]$ such that $M$ is homeomorphic to the topological mapping cylinder $M(f)$. One constructs first a codimension one homology sphere $\Sigma\subset \partial M$ 
providing a surjective map at fundamental group level, so that 
$\partial M-\Sigma\times (0,1)$ is the disjoint union of two acyclic manifolds $A$ and $B$. 
Then the map $f$ sends $A$ into $0$, $\Sigma\times \{t\}$ into $t$ and $B$ into $1$.  
In particular we can refine the triangulation of $M$ such that the map $f$ becomes simplicial. 
Now, we can define the simplicial mapping cylinder $C(f)$ of $f$, which is a simplicial complex  
collapsing onto $[0,1]$ and hence to a point. By a result of Cohen (see \cite{Cohen0}) 
the simplicial mapping cylinder $C(f)$ is homeomorphic to the topological mapping cylinder $M(f)$
and in particular to $M$.

A weaker version concerns the case of open manifolds and reads  as follows: 
 
\begin{ques}\label{openplcollapse}
Consider an open contractible $n$-manifold $W$, $n\geq 5$,  which is an infinite mapping cylinder, namely the union of PL manifolds with disjoint interiors, each one homeomorphic to a topological mapping cylinder of  some acyclic map between closed PL homology spheres.  Suppose that the kernels of the induced maps at fundamental groups level are normal closures of  finitely generated perfect groups. 
Then is the manifold $W$ homeomorphic to a polyhedron which is properly  PL collapsible?
\end{ques}

In the absence of the requirement that the CAT(0) metric be polyhedral we expect the following related question. 

\begin{ques}\label{thm:CATTOP} 
An open contractible $n$-manifold $W$, $n\geq 5$, admits a CAT(-1) complete length metric if and only if it is pseudo-collarable, it has   perfectly semistable fundamental group at infinity and 
the Chapman-Siebenmann obstruction $\tau_{\infty}$ vanishes.  Moreover this is so if only if 
$W$ is homeomorphic to a topologically collapsible polyhedron?
\end{ques}

\section{Preliminaries}
\subsection{Obstructions}\label{obstructions}
According to \cite{Guilb} the manifold $M$ is {\em inward tame at infinity}, if for arbitrarily small neighborhoods of infinity $U$ there exist homotopies $H:U\times [0,1]\to U$ with $H_0$ being identity and $H_1(U)$ having compact closure. 
Alternatively, $M$ is inward tame if and only if arbitrarily small neighborhoods of infinity $U$ are {\em finitely dominated}, namely 
there exist finite complexes $K$ and maps $u:U\to K$ and $d: K\to U$ such that 
$d\circ u$ is homotopic to identity of $U$.

The projective class group functor $\widetilde{K}_0$ associates to a group $\pi$ the 
abelian group $\widetilde{K}_0(\pi)$ of stable isomorphism classes of finitely generated projective left modules over $\Z[\pi]$. 
Wall proved in \cite{Wall1,Wall2} that each finitely dominated CW complex $X$ determines a class 
$\sigma(X)\in \widetilde{K}_0(\pi_1(X))$ which vanishes if and only if 
$X$ has the homotopy type of a finite complex.

We denote by ${\rm Wh}$ the Whitehead functor which associates to a group $\pi$ 
the abelian group $\widetilde{K}_1(\pi)/\pi=K_1(\pi)/(\pm \pi)$, namely the quotient of $GL(\Z[\pi])$ by the subgroup 
generated by the elementary matrices, elements of $\pi$  and $-\mathbf 1$. Note that the subgroup generated by  the elementary matrices coincides with the derived subgroup of $GL(\Z[\pi])$. It is well-known that for every homotopy equivalence of finite CW complexes $f:K\to L$ there exists an element $\tau(f)\in {\rm Wh}(\pi_1(L))$, called the  Whitehead 
torsion of $f$, which vanishes if and only if $f$ is a simple homotopy equivalence.

The previous obstructions have natural extensions to the case of infinite complexes. 
Let $F$ be one of the two functors above. If $X$ is a topological space and $X_i$ denote its connected components we set 
$F(X)=\oplus_{i} F(\pi_1(X_{i}))$. Note that base points are irrelevant as $F$ sends inner automorphisms into identity maps.
 For a complex $X$ one defines the limit of the $F$ functor at the ends $\varepsilon(X)$ of $X$ as the projective limit: 
\[ F(\varepsilon(X))=\lim_{\leftarrow} (F(\pi_1(X-C))_{C\subset X, C \; {\rm compact}}\]
Set also $F^1(\varepsilon(X))$ be the first derived functor of projective limit applied to the inverse system 
$(F(\pi_1(X-C))_{C\subset X, C \; {\rm compact}}$, also called the attenuation of $F$. Recall that the derived limit of an inverse sequence $G_0\stackrel{p_1}{\leftarrow}G_1\stackrel{p_1}{\leftarrow}G_2\stackrel{p_1}{\leftarrow}\cdots$ is the quotient: 
\[ \lim_{\leftarrow}{}^1 (G_i,p_i) =\frac{\prod_{i=0}^{\infty} G_i}{\langle (x_i-p_{i+1}(x_{i+1}))_{i\in \Z_+}, x_i\in G_i\rangle}\]
Note that $F^1(\varepsilon(X))$ vanishes if and only if the inverse system $(F(\pi_1(X-C))_{C\subset X, C \; {\rm compact}}$ 
is equivalent to an inverse system with surjective bonding maps. 

Let now $M$ be an open manifold which we suppose to be inward tame at infinity. 
Choose a compact manifold exhaustion $M_i\subset {\rm int}(M_{i+1})$ of $M$. 
Define $\sigma_{\infty}(M)\in  \widetilde{K}_0(\varepsilon(M))$ to be the class of 
$(\sigma(M-{\rm int}(M_i))_{i\in \Z_+}$. This is a well-defined and independent on the chosen exhaustion (see \cite{CS}).

Let $\tau_i$ denote the image of 
the Whitehead torsion $\tau(M_{i+1}-{\rm int}(M_i), \partial M_i)$ into 
${\rm Wh}(\pi_1(M-{\rm int}(M_i)))$ by the map induced by the inclusion  $M_{i+1}-{\rm int}(M_i)\hookrightarrow  M-{\rm int}(M_i)$.

The Chapman-Siebenmann obstruction $\tau_{\infty}(M)\in {\rm Wh}^1(\varepsilon(M))$ is the image of 
$(\tau_i)_{i\in \Z_+}\in \prod_{i=1}^{\infty}{\rm Wh}(\pi_1(M-{\rm int}(M_i))$ in the quotient 
${\rm Wh}^1(\varepsilon(M))$. 

Note that in \cite{Sie3} there is a more general definition of the obstructions $\sigma_{\infty}$ and $\tau_{\infty}$ 
for proper homotopy equivalences of locally finite complexes, while the one in \cite{CS} mainly concerns $Q$-manifolds.  

\subsection{Semistability}
Recall that the {\em inverse limit} of  an inverse sequence of groups 
\[G_0\stackrel{p_1}{\leftarrow}G_1\stackrel{p_2}{\leftarrow}G_2\stackrel{p_3}{\leftarrow}\cdots\] 
is defined as: 
\[ \lim_{\leftarrow}{} (G_i,p_i) =\{(x_i)_{i\in \Z_+}\in \prod_{i=0}^{\infty} G_i;  p_{i+1}(x_{i+1})=x_i, \; i\in \Z_+\}\]
The inverse limit is an invariant of the pro-equivalence class of the inverse system. 
Here the sequence  $(G_i,p_i)$ and $(H_i,q_i)$ are pro-equivalent, if, after passing to subsequences (namely replacing some $p_i$ by a composition of arrows) and reindexing
there exist homomorphisms $H_{i+1}\to G_{i+1}$ and $G_{i+1}\to H_i$ producing commutative diagrams:
\[\begin{array}{cccccccc}
G_i & & \longleftarrow &    & G_{i+1} &  &  \longleftarrow &\\
& \nwarrow & & \swarrow  &  & \nwarrow & \\
\longleftarrow & &  H_i & &\longleftarrow & & H_{i+1}\\
\end{array} 
 \]

An inverse sequence is {\em stable} if it is pro-equivalent to a constant sequence $(H, id)$. 
An inverse sequence is {\em semistable} if it is pro-equivalent to an inverse sequence $(H_i,q_i)$ where 
all bonding morphisms $q_i$ are surjective. In \cite{Guilb} the author introduced another meaningful related 
notion, as follows. An inverse sequence is {\em perfectly semistable} if it is pro-equivalent to an inverse sequence 
$(H_i,q_i)$ where $H_i$ are finitely presented, the bonding morphisms $q_i$ are surjective and $\ker q_i$ are perfect. 
Note that in the case of perfectly semistable sequence each $\ker q_i$ is the normal closure of a finitely generated subgroup (see \cite{Guilb}, Lemma 2), but this subgroup might not be perfect.

Further we define an inverse sequence to be {\em strongly perfectly semistable} if  it is 
pro-equivalent to an inverse sequence  $(H_i,q_i)$ where $H_i$ are finitely presented, the bonding morphisms $q_i$ are surjective, $\ker q_i$ are perfect and for all $i\leq j$ the subgroup   
$\ker (q_i\circ q_{i+1}\circ \cdots \circ q_{j})\subset H_i$ is the normal closure in $H_i$ of a finitely generated perfect subgroup.

For a one ended open manifold $M$ we consider the  inverse system $\pi_1(\varepsilon(M))$ 
of fundamental groups of $\pi_1(M-K)$, where $K$ are compact subcomplexes of $M$.  
The refined semistability conditions above extend then accordingly to open manifolds by requiring 
 $\pi_1(\varepsilon(M))$ to fulfill them.

The main result of \cite{Guilb-Tin} is the following characterization of pseudo-collarable manifolds: 
\begin{prop}[\cite{Guilb-Tin}]
An open manifold $M$ of dimension $n\geq 6$ is pseudo-collarable if and only if it satisfies the following conditions:
\begin{enumerate}
\item $M$ is inward tame at infinity;
\item $\pi_1(\varepsilon(M))$  is perfectly semistable;
\item Wall's finiteness obstruction $\sigma_{\infty}(M)	\in \widetilde{K}_0(\varepsilon(M))$ vanishes. 
\end{enumerate}
\end{prop}

\subsection{Homology manifolds}
Let $G$ be an abelian group. 
\begin{df}
Let $X$ be a  locally compact topological space $X$ with finite cohomological dimension over $G$. 
Then  $X$ is a  (generalized) {\em homology $G$-manifold} with boundary of dimension $n$ if 
\[ H_i(X, X-\{x\};G)\cong 0, {\rm if }\; i \neq n\]
and for each $x\in X$  the group $H_n(X, X-\{x\};G)$ is either isomorphic to $G$ or $0$. 
The {\em boundary} $\partial X$ of a  homology $G$-manifold with boundary of dimension $n$ consists 
of the set of points $x\in X$ for which $H_n(X, X-\{x\};G)$ vanishes. 
\end{df}

Homology manifolds are the same as homology $\Z$-manifolds.
An example of a homology manifold that is not a topological manifold is the suspension of a 
homology sphere that is not a sphere.

\section{Plus constructions}
A classical construction by Quillen gives a way to kill normal subgroups of the fundamental group of a CW complex while keeping the homology unaltered, as follows:
\begin{prop}
Suppose that $X$ is a finite CW complex and $\pi_1(X)\to \pi$ is a surjective homomorphism onto the finitely presented group $\pi$ perfect kernel. Then there exists a CW complex $Y$ and a continuous map 
$f: X \to Y$, unique up to homotopy equivalence, which induces the given epimorphism $\pi_1(C)\to \pi$ at the level of fundamental groups and is a $\Z[\pi]$-homology equivalence.   
\end{prop}

The space $Y$ is said to be obtained by the plus construction out of $X$ and the given epimorphism and sometimes 
will be denoted by $X^+$. 

The plus construction could also be performed in other categories, e.g. for topological manifolds. 
To this purpose we need the following:
\begin{df}
The compact cobordism $(W, M, N)$ of topological manifolds is a plus cobordism if 
\begin{enumerate}
\item the inclusion $N\hookrightarrow W$ is a simple homotopy equivalence; 
\item the map $\pi_1(M)\to \pi_1(W)$ induced by inclusion is surjective; 
\item the inclusion  $N\hookrightarrow W$ is a $\Z[\pi_1(W)]$-homology equivalence. 
\end{enumerate}
\end{df}

Following Hausmann (\cite{H}, section 3), we have: 
 
 \begin{prop}
 Given a closed topological manifold $M$ of dimension $n\geq 5$ and a surjective homomorphism 
 $\pi_1(M)\to \pi$ onto a finitely presented group with perfect kernel, then there exists an unique 
 plus cobordism $(W,M,N)$ such that the map induced by the inclusion 
 $M\to W$ is $\pi_1(M)\to \pi$, up to homeomorphism relative to $M$. 
 \end{prop}
Moreover $N$ has the homotopy type of the  Quillen plus construction $M^+$. 
If $M$ had a PL or smooth structure, then $W$ is unique up to PL homeomorphism and 
diffeomorphism rel $M$, respectively. 

This construction can be realized also by embedded codimension zero cobordisms in a given manifold with 
boundary (see \cite{Guilb-Tin13}) and all these constructions could be performed by asking 
$N\hookrightarrow W$ be a homotopy equivalence with a prescribed torsion in ${\rm Wh}(\pi_1(W))$ 
(see \cite{Su-Ye}).  

We can relax the plus cobordism definition following Guilbault (see e.g. \cite{Guilb}), 
to a notion essential for pseudo-collarable manifolds: 

\begin{df}
The compact cobordism $(W, M, N)$ of topological manifolds is a one sided $h$-cobordism if 
 the inclusion $N\hookrightarrow W$ is a  homotopy equivalence. 
\end{df}
It is a well-known consequence of a result  of Hausmann   (see \cite{H2}, Lemma 2.0, \cite{DT1}, Lemma 2.5., \cite{Guilb}, Lemma 6) that the map $\pi_1(M)\to \pi_1(W)$ induced by inclusion is surjective with perfect kernel and 
 $N\hookrightarrow W$ is a $\Z$-homology equivalence, i.e. $H_*(W,M)=0$.

\section{Necessary conditions for CAT(0) metrics}
The aim of this section is to prove the first part of Theorem \ref{thm:CAT1}, which we restate here 
for completeness:
\begin{thm}
An open  topological $n$-manifold $M$ with $n\geq 5$  carrying a complete polyhedral  CAT(0) metric satisfies the following conditions:
\begin{enumerate}
\item $M$ is pseudo-collarable;
\item The Chapman-Siebenmann obstruction $\tau_{\infty}(M)\in {\rm Wh}^1(\varepsilon(M))$ vanishes; 
\item  $\pi_1(\varepsilon(M))$ is strongly perfectly semistable.
\end{enumerate}
\end{thm}

\subsection{CAT(0) metrics necessitate pseudo-collarability}\label{necessary} We  first argue  that the condition of pseudo-collarability is necessary. 
\begin{prop}\label{pseudocollar}
An open topological $n$-manifold $M$, with $n\geq 5$,  which admits a polyhedral CAT(0) metric is pseudo-collarable. 
\end{prop}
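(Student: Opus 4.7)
The strategy is to build the pseudo-collar exhaustion of $M$ from closed CAT(0) metric balls around a fixed point, thickened so as to become compact PL submanifolds. Fix a base point $p \in M$ and choose an increasing sequence $r_j \to \infty$. Because the polyhedral CAT(0) metric is complete and the underlying space is locally compact (being a topological manifold), the CAT(0) Hopf--Rinow theorem \cite{BH} ensures that every closed metric ball $\overline{B(p, r)}$ is compact; in particular the sets $B_j := \overline{B(p, r_j)}$ form a compact exhaustion of $M$.

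The main CAT(0) input is the radial deformation retraction of the complement of a ball onto its boundary sphere. Since geodesics between points of a CAT(0) space are unique and closed balls are convex, every $x \in M \setminus B_j$ lies on a unique unit-speed geodesic $\gamma_x$ issuing from $p$, and the formula
\[
H(x, t) \;=\; \gamma_x\bigl((1 - t)\, d(p, x) + t\, r_j\bigr), \qquad t \in [0, 1],
\]
defines a strong deformation retraction of $\overline{M \setminus B_j}$ onto the metric sphere $S_j := \partial B_j$. In particular $S_j \hookrightarrow \overline{M \setminus B_j}$ is a homotopy equivalence for every $j$.

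To promote the balls $B_j$ to compact PL submanifolds $M_j$, I would take a thin PL regular neighborhood of $B_j$: fix a triangulation $\Delta$ of $M$ and, after passing to a sufficiently fine subdivision, let $M_j$ be a PL regular neighborhood of the subcomplex of $\Delta$ generated by the simplices meeting $B_j$. Standard PL regular-neighborhood theory then yields a compact PL $n$-submanifold $M_j$ with boundary, such that $B_j \subset \mathrm{int}(M_j) \subset M_j \subset \overline{B(p, r_j + \epsilon_j)}$ for some $\epsilon_j \to 0$; after suitable choices one has $M_j \subset \mathrm{int}(M_{j+1})$ and $\bigcup_j M_j = M$.

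The main obstacle, and where most of the work lies, is to verify that $\partial M_j \hookrightarrow \overline{M \setminus M_j}$ is a homotopy equivalence. The plan is to transfer the radial retraction $H$ from $S_j$ to $\partial M_j$: the radial geodesics from $p$ foliate the annular region $\overline{M_j \setminus B_j}$, and provided $M_j$ is \emph{star-shaped} with respect to $p$---in the sense that each radial geodesic from $p$ meets $\partial M_j$ in exactly one point---one obtains a radial homeomorphism $S_j \cong \partial M_j$ together with a strong deformation retraction of $\overline{M \setminus M_j}$ onto $\partial M_j$. Ensuring this star-shapedness is the delicate step; I would arrange it by choosing $r_j$ outside a small set of ``critical'' radii for the distance function $d(p, \cdot)$, by perturbing $\Delta$ into general position with respect to the radial geodesics from $p$, and by invoking manifold-recognition tools available in dimension $n \geq 6$---notably the Cannon--Edwards criterion (cf.~Lemma~\ref{CE})---to rectify the local structure of the boundary spheres $\partial M_j$.
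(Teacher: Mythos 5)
The first two steps of your argument — taking closed CAT(0) balls $B_j = \overline{B(p,r_j)}$ and using the radial geodesic retraction to show $S_j = \partial B_j \hookrightarrow \overline{M\setminus B_j}$ is a homotopy equivalence — are exactly how the paper's proof begins. After that, the two arguments diverge, and yours has a real gap at the place you yourself flag as ``the delicate step.''

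The gap is the star-shapedness of the regular neighborhood $M_j$. A PL regular neighborhood of the subcomplex spanned by simplices meeting $B_j$ is a purely combinatorial object, built from the triangulation $\Delta$; its boundary weaves through the simplices and has no a priori relation to the radial geodesics emanating from $p$. None of the three tools you invoke to repair this actually bites: (i) in a CAT(0) space the distance function $d(p,\cdot)$ is convex and has no critical points away from $p$, so there is no ``small set of critical radii'' in the Morse-theoretic sense to avoid; (ii) perturbing $\Delta$ is a combinatorial move and there is no mechanism forcing the perturbed regular-neighborhood boundary to meet each radial geodesic exactly once — this is a global transversality condition with respect to a metric foliation that the triangulation does not see; and (iii) the Cannon--Edwards criterion identifies when a polyhedral homology manifold is a topological manifold, but your $\partial M_j$ is already a PL manifold by regular-neighborhood theory, so Cannon--Edwards has nothing to contribute to the homotopy-equivalence question. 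In short, the step ``provided $M_j$ is star-shaped'' is left as an assumption that the rest of the proposal does not, and as far as I can see cannot, discharge.

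The deeper point your proposal does not engage with is that the metric sphere $\partial B(p,r)$ itself is a priori only a polyhedral homology manifold, not a topological manifold. The technical core of the paper's proof is precisely to approximate $\partial B(p,r)$ by a genuine locally flat codimension-one submanifold that still cobounds a pseudo-collar. This is done in two lemmas: first, for generic $r$ the polyhedron $\partial B(p,r)$ contains manifold points and hence has vanishing Quinn obstruction, so it admits a resolution $g : S \to \partial B(p,r)$ from a closed $(n-1)$-manifold $S$; second, Siebenmann's cell-like approximation theorem (together with Ferry's codimension-one result) is used to push $S$ off the singular set and obtain a topologically flat embedding $h_\varepsilon(S)$ close to $\partial B(p,r)$, together with an ambient homotopy. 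The homotopy equivalence for $\partial M_j = h_1(S)$ is then inherited from the one for $\partial B(p,r)$ by passing to the limit along the ambient homotopy, not proved directly by a radial retraction. Your regular-neighborhood construction side-steps the manifold-recognition problem but simultaneously destroys the radial structure that made the retraction automatic; you would need to reconstruct one or the other, and the proposal does not do so.
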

This result is known for $n\geq 6$: it follows from Remark 5 of \cite{Guilb} and the main Theorem in \cite{Guilb-Tin}.  Here is an alternative proof, covering the case $n=5$, as well. 
\begin{proof}
The closed metric ball centered at $p\in M$ of radius $r$ is denoted by $B(p,r)$. 
Its interior ${\rm int}(B(p,r))$ is the open metric ball of radius $r$. 

Now,  Alexandrov proved  (\cite{BN}, Prop. 8.2-8.3  and \cite{Th}, Prop. 2.) that any geodesic space endowed with a 
CAT(0) metric which is a homology manifold also has the geodesic extension property, namely every geodesic segment can be extended to a  bi-infinite geodesic (see  also \cite{BH}, ch.\ II.5, Prop.\ 5.12).  This property is also called geodesic completeness. Therefore 
the metric sphere of radius $r$ coincides with the frontier $\partial B(p,r)$, namely 
with the set of points $q\in B(p,r)$ such that  ${\rm int}(B(q,\varepsilon))\cap (M\setminus B(p,r))\neq\emptyset$, 
for every $\varepsilon >0$.

\begin{lem}
The metric sphere  $\partial B(p,r)$ is homotopy equivalent to $\overline{M\setminus B(p,r)}$.
\end{lem}
\begin{proof}
Indeed $\partial B(p,r)$ is a strong deformation retract of $\overline{M\setminus B(p,r)}$ (see \cite{BH}, ch.\ II.2, Prop.\ 2.4.(4)). 
\end{proof}

We shall see next that there exists a manifold approximation of $\partial B(p,r)$ sharing the same property. 

\begin{lem}
When the CAT(0) metric is polyhedral, the metric sphere $\partial B(p,r)$ is a polyhedral homology $(n-1)$-manifold having the homology of a  $(n-1)$-sphere. 
\end{lem}
\begin{proof}

According to (\cite{Th}, Prop. 2.7) $B(p,r)$ is a homology $n$-manifold with boundary, whose boundary $\partial B(p,r)$ as homology manifold coincides with the metric sphere sphere of radius $r$ and hence with its frontier. According to Mitchell theorem from \cite{Mit} the metric sphere $\partial B(p,r)$ is a homology $(n-1)$-manifold. Since $B(p,r)$ has the homology of a $n$-ball, its boundary $\partial B(p,r)$ has the homology of a $(n-1)$-sphere. 

Further the metric sphere is polyhedral since its intersection with every cell of the triangulation $\Delta$ is a polyhedron.  
Indeed $B(p,r)$ is convex, namely the geodesic segment determined by two points of it is 
contained in $B(p,r)$. Therefore the intersection $B(p,r)\cap e$ with a cell $e$ of $\Delta$ is a 
convex subset of $e$ and hence it bounds a polyhedron. Now $B(p,r)$ has non-empty intersection with only finitely many cells of $\Delta$, because $\Delta$ is locally finite. Thus $\partial B(p,r)$ is a finite  union of  
polyhedra which pairwise intersect along subpolyhedra and hence a polyhedron itself. 
\end{proof}

Next we will extend the result of Ferry (see \cite{Ferry2,Ferry}) to an approximation theorem 
of resolvable generalized homology manifolds in codimension one. We have first: 
\begin{lem}\label{resolvable}
For generic radii $r$ the polyhedron $\partial B(p,r)$ admits a resolution. 
\end{lem}
\begin{proof}
Quinn's resolution theorem (\cite{Weinberger}) states that there exists a locally defined obstruction invariant  in $1+8\Z$ 
which detects precisely when a generalized homology sphere has a resolution. As above, the intersection 
$B(p,r) cap e$ with a cell $e$ of $\Delta$ is convex . Furthermore,  since the metric is piecewise smooth, then  
for generic $r$ the frontier of  $B(p,r)\cap e$ is a convex hypersurface.  The later contains an open dense set 
which is a piecewise linear submanifold. Therefore $\partial B(p,r)$ contains manifold points.  
In particular Quinn's obstruction is trivial. 
\end{proof}

We now want to prove that there exists arbitrarily close approximations of 
the generalized homology manifold $\partial B(p,r)$ by locally flat topological submanifolds of $M$. 
Namely, there exists by Lemma \ref{resolvable} a closed $(n-1)$-manifold $S$ endowed with 
a surjective cell-like map  $g:S\to \partial B(p,r)$. 

\begin{lem}\label{approximation} 
For every $\varepsilon >0$ there exists a topologically flat 
embedding $h_{\varepsilon}: S\to M$, such that $h_{\varepsilon}(S)$ is  $\varepsilon$-close to $\partial B(p,r)$. 
Moreover, there exists an ambient homotopy $H:M\times [0,1]\to M$, with the property that 
$H_1={\rm id}$,  $H_t$ is a homeomorphism for any $t>0$, 
$H_{\varepsilon}(h_1(S))=h_{\varepsilon}(S)$ and $H_0(h_1(S))=\partial B(p,r)$. 
\end{lem}
\begin{proof}
{
Observe that $\partial B(p,r)$ is separating $M$. 
Consider the ENR $M'=B(p,r)\cup \partial B(p,r)\times [0,1] \cup M-{\rm int}(B(p,r))$, which is a generalized homology 
manifold. There is a  proper cell-like map $q: M'\to M$  which collapses $\partial B(p,r)\times [0,1]$ to 
$\partial B(p,r)$. Further $M'$ admits a resolution, as it contains manifolds points; namely there exists a proper cell-like map  
$p: P\to M'$ from a manifold $P$. Let $f:P \to M$ be the composition $q\circ p$. 
Then $f$ is a proper cell-like map. By a classical result of Siebenmann (see \cite{Sie2}, Approximation Thm. A) 
$f$ is a limit homeomorphism. This means that there exists a level preserving cell-like map 
$F: P\times [0,1]\to M\times [0,1]$ such that $F(x,t)=(f_t(x),t)$, where $f_t:P\to M$ for $0\leq t <1$ are 
homeomorphisms $\varepsilon$-close to $f$ and $f_1=f$.   
}

{   
As $\partial B(p,r)$ is a codimension one compact polyhedron, the combinatorics of the intersections 
with a triangulation subjacent to the polyhedral complex $\Delta$, and hence its homeomorphism type, 
will not change in a small neighborhood of the generic $r$. It follows that 
$\partial B(p,r)\times (-\varepsilon,\varepsilon)$ is embedded in $M$ and hence it is a manifold.  
The product map $g \times {\rm id}:S\times (\varepsilon,\varepsilon)\to
\partial B(p,r)\times (\varepsilon,\varepsilon)$ is proper and cell-like.
Since both polyhedra are topological manifolds, $g\times {\rm id}$ is a 
limit homeomorphism, by the result of Siebenmann cited above. 
By the same argument map $p|_{p^{-1}(\partial B(p,r)\times (\varepsilon,\varepsilon))}$ is also a limit homeomorphism. 
Therefore there exists a codimension zero embedding $g_{\varepsilon}:S\times (\varepsilon,\varepsilon)\to P$. 
}

{
It follows that $f_{1-\varepsilon}\circ g_{\varepsilon}(S\times \{0\})$ is a locally flat approximation 
of $\partial B(p,r)$ (see also (\cite{Ferry}, Thm.1).   We put $h_{\varepsilon}=f_{1-\varepsilon}\circ g_{\varepsilon}$. 
The ambient homotopy $H$ is constructed from $F$, by identifying $P$ and $M$ by means of $F_0$.  
}
\end{proof}
Let $V$ be the closure of the unbounded component of $M\setminus h_1(S)$. 
It follows that for all $j\geq 1$ we have: 
\[\pi_j(V,h_1(S))\cong\pi_j(H_t(V), H_t(h_1(S))), \; {\rm for \; all }\; t>0.\]  
As $h_1(S)$ has codimension one  and $H_1$ is a hereditary homotopy equivalence we can pass to the limit $t\to 0$ to obtain: 
 \[\pi_j(V,h_1(P))\cong \pi_j(\overline{M\setminus B(p,r)}, \partial B(p,r))=0.\]
This shows that $V$ is a manifold pseudo-collar, as claimed. 
\end{proof}


\subsection{CAT(0) metrics need trivial Chapman-Siebenmann obstruction}
Any locally compact ANR, in particular a manifold $M$ with a CAT(0) metric has a $\mathcal Z$-compactification (see \cite{AG99}, Ex.6, Rk.1). It follows that the product $M\times Q$ with the Hilbert cube $Q$ has a $\mathcal Z$-compactification. 
The main theorem of \cite{CS} shows that $\tau_{\infty}(M\times Q)=0$. On the other hand $\tau_{\infty}$ is a proper homotopy invariant and hence $\tau_{\infty}(M )=0$.

 \subsection{Strong perfect semistability}
 The {\em geodesic contraction} is the map $c_r:M-{\rm int}(B(p,r))\to \partial B(p,r)$  sending a point $q\in M-{\rm int}(B(p,r))$ into the point $c_r(q)\in \partial B(p,r)$ lying on the geodesic segment joining $p$ and $q$ which is at distance $r$ from $p$.  Its restriction to a metric sphere provides 
 the geodesic contraction map $c_{R,r}:\partial B(p,R) \to \partial B(p,r)$, for any $r<R$.  
 
 Moreover, the map $c_{R,r}$ has acyclic point inverses (see \cite{DJ}, Proof of Thm. 3d.1 and 
 \cite{Th}, Cor. 2.10) and hence it is a degree one between  homology manifolds with the 
 homology of a sphere, for all $r<R$. 
  
The {\em shadow} of a point $q\in \partial B(p,r)$ is the set $c_r^{-1}(q)\subset  M-{\rm int}(B(p,r))$. 

In the case of polyhedral CAT(0) metrics the shadow and hence the fibers of the geodesic 
contractions $c_{R,r}$ are determined by local data called {\em infinitesimal shadow}, as defined in (\cite{DJ}, section 2d). 
Let $\gamma:(-\varepsilon, \varepsilon)\to M$ be a germ of geodesic  at $\gamma(0)=p$. 
We denote by $Lk(p,\Delta)$ the {\em link} of $p$ in the simplicial complex $\Delta$.  Then $\gamma$ defines two 
points $\gamma_p^{\pm}\in Lk(p,\Delta)$, which are the incoming and outgoing tangent vectors along $\gamma$ at $p$.
Then, for $p\in \Delta$ and $v\in Lk(p,\Delta)$  the infinitesimal shadow $sh(p, v)\subset Lk(p,\Delta)$ is defined as 
\[ sh(p, v)= \{w; {\rm there \; exists \; geodesic \; germ }\; \gamma\;  {\rm at \; } p \; {\rm such \; that} \; 
\gamma_p^-=v, \gamma_p^+=w\}\subset Lk(p,\Delta)\]

\begin{lem}
If the CAT(0) metric is polyhedral, then for generic $r<R$ the fibers of the geodesic contraction 
$c_{R,r}: \partial B(p,R) \to \partial B(p,r)$ are  acyclic ANR. 
\end{lem}
\begin{proof}
According to (\cite{DJ}, Lemma 2d.1) the infinitesimal shadow  $sh(p,v)$ is the 
complement of the open metric ball of radius $\pi$ in the link 
$Lk(p,\Delta)$, endowed with its  piecewise spherical CAT(1) metric. 
Then both the metric disk and $sh(p,v)$ are subpolyhedra of $Lk(p,\Delta)$. 
Moreover, the open metric disk of radius $\pi$ is a contractible homology manifold with boundary. 

Consider now  $q\in \partial B(p,r)$  and  $z\in c_r^{-1}(q)$. Let $\gamma$ be the geodesic segment joining 
$q$ to $z$, which intersects only finitely many cells of $\Delta$, say in order $e_1,e_2,\ldots,e_k$. 

By induction on $j$ we show that $c_r^{-1}(q)\cap e_j$ is a polyhedron. 
Now, either $e_1\prec e_2$ or $e_2\prec e_1$. In the first case 
$c_r^{-1}(q)\cap e_1=\{p\}$. In the second case $c_r^{-1}(q)\cap e_1$ is PL homeomorphic to 
the intersection of the cone $C(sh(p, \gamma_p^-)\cap e_2)$ and hence a polyhedron. 
Further, for the induction step, assume the claim holds for  the first $j$ cells.

Now, for each point $x$ of  the polyhedron $c_r^{-1}(q)\cap e_j$ 
there is a geodesic segment $\gamma(x)$ joining $x$ and $p$, which 
determines an incoming tangent vector $\gamma(x)_p^-=v(x)\in Lk(x, \Delta)$. 
For all $x$ which belong to the open cell $e_j$ the links $Lk(x,\Delta)$ are pairwise isometric and hence 
$sh(x,\gamma(x)_x^-)$ are pairwise isometric spherical polyhedra.   
Therefore  
$c_r^{-1}(q)\cap e_{j+1}$ is PL homeomorphic to either the join of two polyhedra
\[(c_r^{-1}(q)\cap e_j)* (sh(x,\gamma(x)_x^-)\cap e_{j+1}),  {\rm  \; if} \; e_j\prec e_{j+1}\] 
or the intersection 
\[ \left( (c_r^{-1}(q)\cap e_j)* (sh(x,\gamma(x)_x^-)\cap e_{j})\right) \cap e_{j+1}, \; {\rm when }\; e_{j+1}\prec e_j\]

Since $B(p,r)$ is convex and compact,  $c_r^{-1}(q)\cap B(p,r)$ is a finite union of polyhedra and hence an ANR, actually a compact metrizable ENR. Moreover, the intersection $c_r^{-1}(q)\cap \partial B(p,r)$  is also a finite union of polyhedra 
and hence a compact metrizable ENR. 

This implies that point inverses of the geodesic contraction $c_{R,r}$ are  acyclic ANR. 
\end{proof}

We shall now use the following extension of a result from (\cite{DT1}, Prop.4.8):

\begin{lem}
Suppose that $f:N_1\to N_2$ is an acyclic map between  $(n-1)$-dimensional homology manifolds without boundary such that $f^{-1}(y)$ is an ANR for every $y\in N_2$. Then the kernel  $\ker f_*$ of the map 
$f_*:\pi_1(N_1)\to \pi_1(N_2)$ induced 
by $f$ at the level of fundamental groups is the normal closure of a finitely generated perfect group. 
\end{lem}
\begin{proof}
The proof is similar to that presented in \cite{DT1} for topological manifolds but we include it here for the sake of completeness. For every $y\in N_2$ the preimage $f^{-1}(y)$ being an ANR by our assumptions admits an open 
neighborhood $U\subset N_1$ which deformation  retracts onto $f^{-1}(y)$. As $f$ is surjective, since acyclic, 
the collection of open sets $U$ is a covering of $N_1$. As $N_1$ is compact we can extract a finite cover 
$\{U_i\}$ corresponding to the points $y_i\in N_1$.
We can join a base point $z_0\in N_1$ with $f^{-1}(y_i)$ by means of pairwise disjoint arcs which only intersect 
$\cup_{i}f^{-1}(y_i)$ at their end points. Denote by $Y$ the union of $\cup_{i}f^{-1}(y_i)$ with 
these arcs based at $z_0$. 

Then $Y$ is a compact acyclic ANR. 
By a theorem of West (\cite{West}) any compact metrizable ANR  is homotopy equivalent to a finite cell-complex and 
hence each $\pi_1(f^{-1}(y_i)$ is finitely presented and hence $\pi(Y)$ is finitely presented and perfect. 

Moreover, $\ker f_*$ is normally generated by the image of $\pi_1(Y)$ under the map 
$\pi_1(Y)\to \pi_1(N_1)$ induced by the inclusion $Y\hookrightarrow N_1$. 
Thus $\ker f_*$ is the normal closure of a finitely generated perfect group. 
\end{proof}

We obtained so far a compact exhaustion of $M$ by homology manifolds for which the fundamental pro-group at infinity 
is given by a sequence of surjective homomorphisms, each bonding map having its kernel normally generated by a finitely generated perfect subgroup. 

The homology manifolds arising as boundaries admit arbitrarily close approximations by locally flat 
topological submanifolds of $M$, by Lemma \ref{approximation}. Using notation from this lemma, we know that 
for $r_1 >r_2$ there exist topologically flat embeddings $h_{\varepsilon}:S_i \to M$ such that 
$h_{\varepsilon}(S_i)$ are $\varepsilon$-close to $\partial B(p,r_i)$ and 
an ambient homotopy $H:M\times [0,1]\to M$ such that  $H_1=id$, 
$H_{\varepsilon}$  is a homeomorphism for every $\varepsilon >0$ and 
$H_{\varepsilon}(h_1(S_i))=h_{\varepsilon}(S_i)$, while $H_0((h_1(S_i))=\partial B(p,r_i)$. 
Let $Z$ be the manifold bounded by $h_1(S_1)\sqcup h_1(S_2)$ and $V$ the closure of the unbounded 
component of $M-h_1(S_2)$. Recall from the last lines of the proof of Proposition \ref{pseudocollar} 
that $V$ is a pseudo-collar.

As $H_1$ is a hereditary homotopy equivalence we have an identification between the sequence of maps: 
\[\pi_1(\partial B(p,r_1))\to \pi_1(B(p,r_1) -{\rm int}(B(p,r_2))) \to 
\pi_1(M-{\rm int}(B(p,r_2)))\to \pi_1(\partial B(p,r_2))\]
and 
\[\pi_1(h_1(S_1))\to \pi_1(Z) \to \pi_1(V)\to \pi_1(h_1(S_2))\]
Therefore the map $\pi_1(h_1(S_1))\to \pi_1(h_1(S_2))$ is surjective and its kernel is 
normally generated by a finitely generated perfect group.  
Thus we can replace replace metric spheres by their topologically flat approximation, while keeping 
the same fundamental pro-group at infinity for the associated exhaustions. 

This implies that $M$ is strongly perfectly semistable.

\section{Exhaustions  of pseudo-collars by plus cobordisms}\label{pluscobordisms}
\subsection{Exhaustions of pseudo-collars with trivial $\tau_{\infty}$}

We start by recalling the following result of Guilbault: 
\begin{prop}[\cite{Guilb}]
An open manifold is pseudo-collarable if and only if it is union of one-sided h-cobordisms with disjoint 
interiors. 
\end{prop}
This section aims at refining this characterization as follows: 
\begin{prop}
An open manifold is pseudo-collarable and its Chapman-Siebenmann obstruction 
$\tau_{\infty}(M)\in {\rm Wh}^1(\varepsilon(M))$ vanishes if and only if 
it is the union of  one-sided $h$-cobordisms with trivial torsion and disjoint 
interiors. 
\end{prop}

The if part is trivial, as unions of  one-sided cobordisms with vanishing torsion have trivial 
Chapman-Siebenmann obstruction.
 
For the converse  we first record, following \cite{Guilb,Guilb-Tin}:
 \begin{lem}\label{psdc}
If the open contractible manifold  $M$ is pseudo-collarable  then there exists  
an exhaustive filtration $M_{i}$, $i\geq 0$ of $M$ with the following properties:
\begin{compactenum} 
\item $M_{i}$ are  compact contractible manifolds;
\item the inclusion maps  $\overline{M\setminus M_j}\hookrightarrow \overline{M\setminus M_{i}}$ for $j>i$ 
induce surjections at the level of fundamental groups, and
\item the inclusions $\partial M_{i}\hookrightarrow \overline{M\setminus M_{i}}$ induce isomorphisms at the level of fundamental groups.
\end{compactenum}
\end{lem}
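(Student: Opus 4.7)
The plan is to refine an arbitrary pseudo-collar exhaustion $\{N_k\}$ of $M$ into one satisfying all three listed properties. By definition each $N_k$ is a compact submanifold with $N_k \subset \mathrm{int}(N_{k+1})$, and $\partial N_k \hookrightarrow \overline{M \setminus N_k}$ is a homotopy equivalence. Property (3) is then immediate, since a homotopy equivalence induces an isomorphism on $\pi_1$.

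For (1), I would decompose $M = N_k \cup_{\partial N_k} \overline{M \setminus N_k}$ as a pushout along the cofibration $\partial N_k \hookrightarrow \overline{M \setminus N_k}$. Since this inclusion is a homotopy equivalence, the gluing lemma for pushouts yields that the complementary inclusion $N_k \hookrightarrow M$ is also a homotopy equivalence. Combined with the contractibility of $M$, this forces each $N_k$ to be contractible.

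Property (2) is the substantive point and requires passing to a subsequence together with a modification of the representatives. My plan is to appeal to Guilbault's structural theory of pseudo-collarable manifolds \cite{Guilb}, which establishes the semistability of the fundamental pro-group at infinity. Concretely, fix $i$; the images $H_{i,j} := \mathrm{im}\bigl(\pi_1(\overline{M \setminus N_j}) \to \pi_1(\overline{M \setminus N_i})\bigr)$ form a descending chain of finitely generated subgroups of $\pi_1(\overline{M \setminus N_i}) \cong \pi_1(\partial N_i)$, and semistability forces this chain to stabilize. One can then enlarge each $N_{k_i}$ slightly by attaching compact regions that kill the excess $\pi_1$ on the new boundary, producing compact contractible submanifolds $M_i$ with $\pi_1(\partial M_i)$ equal to the stable image, so that the resulting bonding maps $\pi_1(\overline{M \setminus M_j}) \to \pi_1(\overline{M \setminus M_i})$ are surjective for all $j > i$.

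The main obstacle is property (2): the surjectivity on $\pi_1$ of complement inclusions is not formal from the bare homotopy equivalence in the definition of pseudo-collarability, and one must invoke both the semistability input and a Mittag-Leffler style handle-theoretic refinement as in Guilbault's framework. Once the refinement is carried out, properties (1) and (3) persist without further work, and the filtration $\{M_i\}$ satisfies all three conditions simultaneously.
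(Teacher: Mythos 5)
Your derivations of (1) and (3) for the original pseudo-collar exhaustion $\{N_k\}$ are correct and actually more self-contained than what the paper writes: (3) is indeed immediate from the definition, and the pushout/gluing-lemma argument for (1) is a clean way to transfer contractibility of $M$ to the compact pieces. The paper arrives at (1) the same way, but only after a chain of citations to Guilbault, so this part of your route is a genuine simplification.

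The problem is the handling of (2), and specifically the claim near the end that ``Once the refinement is carried out, properties (1) and (3) persist without further work.'' This is where the proof actually lives, and it is not a formal consequence of what precedes it. Your enlargement replaces $N_{k_i}$ by a larger compact $M_i$ whose boundary $\partial M_i$ is a genuinely different manifold from $\partial N_{k_i}$, so there is no a priori reason that $\partial M_i \hookrightarrow \overline{M\setminus M_i}$ is a homotopy equivalence, or even a $\pi_1$-isomorphism. Your argument for (1) needed exactly that homotopy equivalence as input, so (1) does not persist for free either. Making the enlargement preserve the pseudo-collar structure --- that is, attaching handles to kill the excess $\pi_1$ while simultaneously keeping the boundary inclusion a homotopy equivalence --- is precisely the content of Guilbault's Theorems~4 and~5 (and the Guilbault--Tinsley extension), which is what the paper cites. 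Likewise, invoking semistability of the pro-$\pi_1$ at infinity is legitimate but is itself a nontrivial consequence of Guilbault's characterization rather than of the bare definition. In short: you correctly identified the difficulty and the right source, but the refinement step you describe is exactly the nontrivial geometric content, and stating that (1) and (3) survive it ``without further work'' papers over the part of the argument that requires the handle theory.
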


Recall now from \cite{Guilb} that any pseudo-collar $W$ can be written as the union of 
1-sided h-cobordisms $W_i$ with disjoint interiors. This means that $W_i$ is a cobordism with left boundary 
$J_i$ and right boundary $J_{i+1}$, so that $J_1=\partial W$, with the property that 
$J_i\subset W_i$ is a homotopy equivalence. The 1-sided h-cobordism $W_i$ is said to be a {\em plus cobordism} 
(see \cite{Q,Rolland}) if the inclusion $J_i\subset W_i$ is a simple homotopy equivalence, namely the torsion 
$\tau(W_i, J_i)$ vanishes in the Whitehead group $Wh(\pi_1(J_i))$. 
One key property needed in the construction below is the following: 

\begin{lem}\label{plusdecomp}
A pseudo-collar manifold $W$ is the union of plus cobordisms with disjoint interiors if (and only if) $\tau_{\infty}(W)=0$.
\end{lem}
\begin{proof}
Our proof follows closely the one given in \cite{CS} for $Q$-manifolds. We start with: 
\begin{lem}
 Let $N$ be a closed $(n-1)$-manifold, $n\neq 4$ and $\mu\in {\rm Wh}(\pi_1(N))$.
 Then there is  a decomposition of $N\times [0,1]=Z_1\cup Z_2$ into two $h$-cobordisms 
 $Z_1$ and $Z_2$ with disjoint interiors such that 
 \[ \tau(Z_1,N\times\{0\})=\mu, \;  \tau(Z_2,Z_1\cap Z_2)=-\mu\]
  \end{lem}
\begin{proof}
There exist $h$-cobordisms $Z_1$ and $Z_2$ with prescribed torsions. Their composition then has trivial torsion: 
\[\tau(Z_1\cup Z_2, N\times\{0\})=\tau(Z_1,N\times\{0\})+\tau(Z_2,Z_1\cap Z_2)= 0\]
By the topological s-cobordism theorem $Z_1\cup Z_2$ is homeomorphic to $N\times [0,1]$. 
 \end{proof}

Assume now that we have a filtration $M_i$ of the pseudo-collar $M$ with the property that 
$M_{i+1}-{\rm int}(M_i)$ are one sided $h$-cobordisms (see e.g. \cite{Guilb}, Prop.2). Let $\tau_i\in {\rm Wh}(M-{\rm int}(M_i))$ denote the image 
of $\tau(M_{i+1}-{\rm int}(M_i), \partial M_i)\in {\rm Wh}(\pi_1(M_{i+1}-{\rm int}(M_i))$
in the group ${\rm Wh}(M-{\rm int}(M_i))$, by means of the inclusion induced homomorphism.
 
By hypothesis $\tau_{\infty}(M)=0$ and hence there exist 
$(\mu_1,\mu_2,\ldots)\in \prod_{i=1}^{\infty} {\rm Wh}(M-{\rm int}(M_i))$, such that for every $i$ 
\[ \mu_i- p_i(\mu_{i+1})= \tau_i\]
where $p_i: {\rm Wh}(M-{\rm int}(M_{i+1}))\to {\rm Wh}(M-{\rm int}(M_i))$ is the induced homomorphism. 

The only reason to consider this obstruction is the fact that although the group maps are surjective the corresponding 
maps between the Whitehead groups is not necessarily surjective.

Recall that $\partial M_i \hookrightarrow M-{\rm int}(M_i)$ is a homotopy equivalence.
Let then $\mu_i'\in {\rm Wh}(\partial M_i)$ be a class whose image by the inclusion induced homomorphism 
is $\mu_i\in  {\rm Wh}(M-{\rm int}(M_i))$. The previous lemma gives us a decomposition 
of a collar $\partial M_i\times [0,1]\subset M_{i+1}-{\rm int}(M_i)$ as the union of two $h$-cobordisms 
$Z_i^1\cup Z_i^2$ with disjoint interiors, such that 
\[ \tau(Z_i^1, \partial M_i)=\tau_i', \; \tau (Z_i^2, Z_i^1\cap Z_i^2)=-\tau_i'\]
We set now $M_{i}'=M_{i}\cup Z_i^1$. Then $\partial M_i'\hookrightarrow M_{i+1}'-{\rm int}(M_i')$ is 
a homotopy equivalence. 

By the formula of the torsion of a composition we have: 
\begin{eqnarray*} \tau(M_{i+1}'- {\rm int}(M_i'), \partial M_i')& =&\tau(Z_i^2, \partial M_i)+ \tau(M_{i+1}-{\rm int}(M_i), \partial M_i) + j_*\tau(Z_{i+1}^1, \partial M_{i+1})\\
&=&-\mu_i+ \tau_i+p_{i+1}(\mu_{i+1})=0\end{eqnarray*}
where $j_*$ is the map induced from inclusion $Z_{i+1}^1\hookrightarrow M-{\rm int}(M_i)$. 
It follows that $M$ is the union of one-sided $h$-cobordisms with trivial torsion. 
\end{proof}

\subsection{Exhaustions by plus cobordisms} 
The aim of this section is to provide the following key ingredient in the proof of Theorem \ref{thm:CAT1}:

\begin{prop}\label{pluscobordism}
An open contractible $n$-manifold $W$, $n\geq 6$, which  is pseudo-collarable,  has strongly perfectly semistable fundamental group at infinity and has a vanishing Chapman-Siebenmann obstruction $\tau_{\infty}$ 
is the union of plus cobordisms with disjoint interiors. Moreover, the plus cobordisms are homeomorphic 
to mapping cylinders of acyclic maps between the boundaries. 
\end{prop}

\begin{rem}
Chapman and Siebenmann considered in \cite{CS} the notion of {\em infinite mapping cylinder} of an inverse sequence of 
compact metric spaces $f_i:X_{i+1}\to X_i$, by sewing together the mapping cylinders $C(f_i)$ along their naturally identified boundaries. Proposition \ref{pluscobordism} states that $W^n$ is an infinite mapping cylinder  of an inverse sequence of acyclic maps  with $X_0$ being a point and $X_i$  closed $(n-1)$-manifolds. 
\end{rem}

\begin{lem}\label{sperfectplus}
Assume that the open manifold $M$ is strongly perfectly semistable at infinity and union of plus cobordisms with disjoint interiors.
Then there exists a compact exhaustion  $M_i$ such that 
$M_{i+1}-{\rm int}(M_i)$ are one sided $h$-cobordisms  with trivial torsion and moreover  each 
$\ker(\pi_1(\partial M_{i+1})\to \pi_1(\partial M_i))$ is the 
normal closure in $\pi_1(\partial M_{i+1})$ of some finitely generated perfect subgroup.
\end{lem}
\begin{proof}
We start with a compact exhaustion $M_i$ such that $\partial M_i\hookrightarrow M_{i+1}-{\rm int}(M_i)$ are homotopy equivalences. 
We can change the exhaustion by passing to subsequences and relabelling such that 
the  kernel $\ker(\pi_1(\partial M_{i+1})\to \pi_1(\partial M_i))$ is the 
normal closure in $\pi_1(\partial M_{i+1})$ of some finitely generated perfect subgroup, while keeping the property that 
$\partial M_i\to M_{i+1}-{\rm int}(M_i)$ are homotopy equivalences. We are then in the situation of lemma 
\ref{plusdecomp}. We alter the decomposition into one sided $h$-cobordisms by adjoining 
$h$-cobordisms. However these changes preserve the 
fundamental groups involved and hence the new decomposition fulfills all required conditions. 
\end{proof}

Thus $\partial M_i\hookrightarrow M_{i+1}-{\rm int}(M_i)$ are simple homotopy equivalences. 

\subsection{Mapping cylinders} 
Further, we recall the following key result  from (\cite{DT1}, Thm.5.2), which we record below:

\begin{prop}\label{Daverman-Tinsley}
Suppose that $(M,N_1,N_2)$ is an $(n)$-dimensional cobordism, $n\geq 6$, such that 
$N_2\hookrightarrow M$ is a homotopy equivalence and the kernel of the map induced by 
inclusion $\pi_1(N_1)\to \pi_1(M)$ is the normal closure in $\pi_1(M)$ of a finitely generated 
perfect group. Then there exists an acyclic map $f:N_1\to N_2'$ to a closed manifold 
$N_2'$ such that $M$ is homeomorphic to $M(f)\cup _{N_2'\times \{1\}}M'$, where 
$M(f)$ is the topological mapping cylinder of $f$ and $(M',N_2'\times\{1\}, N_2)$ is an h-cobordism. 
\end{prop}

This shows that  every cobordism $M_{i+1}-{\rm int}(M_i)$ obtained from an exhaustion as provided by Lemma   \ref{sperfectplus} is homeomorphic to the composition of the mapping cylinder  $C_i$ of some acyclic map 
$\partial M_{i+1}\to N_i$ composed with an $h$-cobordism $Z_i$ with boundary $\partial Z_i=N_i\sqcup \partial M_i$.  
Here $N_i$ is some closed manifold homotopy equivalent to $\partial M_i$. 
 
Now, after identifying the groups $\pi_1(N_i)$, $\pi_1(\partial M_i)$, $\pi_1(M_{i+1}-{\rm int}(M_i))$ and $\pi_1(Z_i)$ 
we have in ${\rm Wh}(\pi_1(\partial M_i))$:
\[ \tau (M_{i+1}-{\rm int}(M_i), \partial M_i)=\tau(C_i, N_i) +\tau(Z_i,\partial M_i)\]
On the other hand 
\[ \tau (M_{i+1}-{\rm int}(M_i), \partial M_i)=0\]
because  $M_{i+1}-{\rm int}(M_i)$  is a plus cobordism. 

We will use now the following: 
\begin{lem}\label{west}
If $f:M\to N$ is an acyclic map whose topological mapping cylinder $M(f)$ is a manifold, then 
the retraction map $\pi: M(f)\to N$ is a simple homotopy equivalence and hence 
$\tau(M(f), N)=0$.  
\end{lem}
\begin{proof}
We use the Chapman's simple homotopy type of compact ANR spaces (see \cite{Chapman}). 
Topological manifolds have the homotopy type of a CW complex, by Kirby and Siebenmann and so 
the product of a compact topological manifold and the Hilbert cube $Q$ is a compact Hilbert cube manifold. 
In our case the manifolds we consider are also triangulable and we don't need the full force 
of the Kirby-Siebenmann theorem. 
According to a deep result of (\cite{West1}, Thm. 2 and section 4, \cite{Sie4}, Thm. 3.4), the  retraction map 
$\pi\times id_Q: M(f)\times Q\to N\times Q$ is homotopic to a 
homeomorphism of Hilbert cube manifolds. Further, Chapman's theorem (\cite{Chapman2,Chapman3}) says that 
$\pi$ is a simple homotopy equivalence. As the inclusion $N\hookrightarrow M(f)$ is a 
homotopy inverse for the retraction map, we derive that $\tau(M(f),N)=0$. 

Alternatively, we can use the fact that a cell-like map between compact ANR's is a simple homotopy equivalence 
(see \cite{Lacher}, Thm. 4.3, following Chapman and West). The retraction map 
$M(f)\to N$ is obvious cell-like, as the point preimage of $y\in N$ is the cone over $f^{-1}(y)$. 
\end{proof}

Now $\tau(C_i,N_i)=0$ and hence $\tau(Z_i,\partial M_i)=0$. By the s-cobordism theorem 
$Z_i$ is homeomorphic to a product and hence  $M_{i+1}-{\rm int}(M_i)$  is homeomorphic to a mapping cylinder 
$C_i$. 

Note that we don't know if we can perturb the acyclic map $f$ to an acyclic PL map;   if this were true, then the its simplicial mapping cylinder would PL collapse onto $\partial M_i$.

\subsection{Acyclic maps}
We will use later the description of $f$ following \cite{DT1} and \cite{Daverman}. 
Let $Q_{i+1}\subset \pi_1(\partial M_{i+1})$ be a finitely generated perfect subgroup  whose normal closure within 
$\pi_1(\partial M_{i+1})$ is $\ker(\pi_1(\partial M_{i+1})\to \pi_1(\partial M_{i}))$.
By Hausmann's trick  there exists $Q_{i+1}^*$ a  finitely presented perfect group of deficiency $0$ equipped with a surjection onto $Q_{i+1}$ (see \cite{H}, section 2.1) . Let $D_{i+1}$ be a  $2$-dimensional complex associated to a balanced presentation of $Q_{i+1}^*$, which is then acyclic. We embed $D_{i+1}$ within $\partial M_{i+1}$ such that the induced map on fundamental groups is the composition $Q_{i+1}^*\to Q_{i+1}\subset \pi_1(\partial M_{i+1})$. 
Then the boundary $\partial \mr{N}_{\partial M_{i+1}}(D_{i+1})$ of a regular neighborhood 
$\mr{N}_{\partial M_{i+1}}(D_{i+1})$ of $D_{i+1}$ within $\partial M_{i+1}$ is a 
codimension one homology sphere.

Now, the inclusion induced map is an isomorphism 
\[ \pi_1(\partial \mr{N}_{\partial M_{i+1}}(D_{i+1}))\to \pi_1(\mr{N}_{\partial M_{i+1}}(D_{i+1}))\]
Indeed any loop in $\mr{N}_{\partial M_{i+1}}(D_{i+1}))$ based at a point of the boundary can be 
homotoped out of $D_{i+1}$, by general position. Since $\mr{N}_{\partial M_{i+1}}(D_{i+1}))-D_{i+1}$
is homeomorphic to $\partial \mr{N}_{\partial M_{i+1}}(D_{i+1}))\times [0,1)$ we can further homotope 
the loop onto $\partial \mr{N}_{\partial M_{i+1}}(D_{i+1}))$, proving that the map above is surjective. 
Further, a null-homotopy 2-disk mapped properly into $(\mr{N}_{\partial M_{i+1}}(D_{i+1})),\partial \mr{N}_{\partial M_{i+1}}(D_{i+1}))$ can be homotoped off $D_{i+1}$ by general position and hence into 
$\partial \mr{N}_{\partial M_{i+1}}(D_{i+1}))$. This yields the injectivity of the homomorphism above. 

We can further homotope the map $D_{i+1}\to \mr{N}_{\partial M_{i+1}}(D_{i+1}))$ to 
an embedding  of $D_{i+1} $ in the boundary, namely such that 
its image is $D_{i+1}^*\subset \partial \mr{N}_{\partial M_{i+1}}(D_{i+1}))$,
and the induced map on fundamental groups is an isomorphism. 

Eventually we consider a collar $\partial \mr{N}_{\partial M_{i+1}}(D_{i+1}))\times [0,1]\subset \partial M_{i+1}$ 
of the boundary  and a Cantor set $C\subset [0,1]$. 
The sets $D_{i+1}^*\times C$ form the set of nondegenerate elements of a upper semi-continuous decomposition $\mathcal U$ of $\partial M_{i+1}$. The associated quotient space $\partial M_{i+1}/\mathcal U$ is the quotient by the equivalence 
relation induced by $\mathcal U$, namely two points are identified if and only if they belong to the same set of the decomposition. We have then: 

\begin{prop}[\cite{DT1} Thm.4.3, \cite{Daverman}, section 2, \cite{DT4}]\label{quot}
The space $\partial M_{i+1}/\mathcal U$ is a topological manifold and the mapping cylinder 
of the quotient map $f_i:\partial M_{i+1}\to \partial M_{i+1}/\mathcal U$ 
is a topological manifold. 
\end{prop}

\begin{proof}[Proof of Proposition \ref{pluscobordism}]
By Proposition \ref{Daverman-Tinsley} and the discussion above it follows that 
$M_{i+1}-{ \rm int}(M_i)$ is homeomorphic to the topological 
mapping cylinder $M(f_i)$ of an acyclic map 
$f_i: \partial M_{i+1}\to \partial M_i$, which is a quotient map as  described in Proposition \ref{quot}. 

Since it is acyclic $f_i$ is a $\Z[\pi_1(\partial M_i)]$ homology equivalence. 
On the other hand  $f_i$ factors as $\partial M_{i+1}\hookrightarrow M_{i+1}-{\rm int}(M_i)\to \partial M_i$, 
where the second map is the strong deformation retract of the mapping cylinder $M(f_i)$ onto its target boundary $\partial M_i$. This implies that the inclusion  $\partial M_{i+1}\hookrightarrow M_{i+1}-{\rm int}(M_i)$ is a  $\Z[\pi_1(\partial M_i)]$ homology equivalence, and hence this cobordism is a plus cobordism. 
\end{proof}

\subsection{Collapsibility of unions of plus cobordisms with strongly perfectly semistable group at infinity}

We found above that  $M$  is endowed with an exhaustion 
by plus-cobordisms, i.e. an ascending filtration by compact 
contractible submanifolds $M_{i}\subset M$  such that $M_{i+1}-{\rm int}(M_{i})$ is a plus cobordism for every $i\ge 0$.

The main result of  the previous section states that 
each plus cobordism $\overline{M_{i+1}-{\rm int}(M_i)}$ is topologically collapsible.
If any of the Questions \ref{plcollapse} or \ref{openplcollapse} has an affirmative answer, then 
$M$ should be homeomorphic to a  PL manifold  
which is  PL collapsible.

\section{CAT(0)  metrics from collapses}\label{constructionmetric}

\subsection{Basic results and techniques} Following \cite{DJ}, it is easy to construct CAT(0) metrics on regular neighborhoods of trees using Gromov's hyperbolization technique. 
To this end, we use metrics along Whitehead's collapsibility (cf.\ \cite{Wh}) as a more direct and suitable (but much less elegant) alternative to Gromov's hyperbolization technique. We recall two critical criteria:

\begin{lem}[cf.\ \cite{DM}]
Consider a locally CAT(K) and locally compact metric length space $X$. 
\begin{compactenum}[(a)]
\item \emph{Cartan--Hadamard theorem.} If $K\le 0$ and $X$ is simply connected, then $X$ is CAT(K).
\item \emph{Bowditch criterion.} If $K> 0$, and every closed curve of length $\le \nicefrac{2\pi}{K}$ can be monotonously contracted to a point, then $X$ is CAT(K).
\end{compactenum}
\end{lem}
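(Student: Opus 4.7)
The plan is to treat both parts of the lemma in a unified way, since Cartan--Hadamard and Bowditch are local-to-global principles for CAT($K$) whose proofs share the same skeleton: the global CAT($K$) inequality on a geodesic triangle $\Delta$ is obtained by deforming $\partial\Delta$ through a distinguished family of short curves, and patching the local CAT($K$) comparison along this deformation.

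For part (a), my first step is to establish uniqueness of geodesics. Local CAT($K$) with $K\le 0$ guarantees that each point has a convex geodesic ball in which geodesics vary continuously with endpoints. Given two curves $\gamma_0,\gamma_1$ with the same endpoints, I would use simple connectedness of $X$ to produce a homotopy between them, then subdivide it by a Lebesgue-number argument so that each small cell lies inside a locally CAT($K$) ball. Iterated application of the local CAT($K$) inequality (the Alexandrov--Reshetnyak patchwork) propagates the local comparison across the whole homotopy, yielding uniqueness of geodesics and, applied to a geodesic triangle, the global CAT($K$) comparison. Because $K\le 0$, the model plane $M_K^2$ has no diameter restriction and no conjugate points, so the patching runs without obstruction; equivalently, one could appeal to the standard fact that the universal cover of a locally CAT($K$) space ($K\le 0$) is globally CAT($K$), and apply it to $X$ itself.

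For part (b), the diameter bound $2\pi/\sqrt{K}$ is essential. It suffices to verify the CAT($K$) comparison for triangles $\Delta$ of perimeter strictly less than $2\pi/\sqrt{K}$, since this is the only regime constrained by the inequality. The boundary $\partial\Delta$ is then a closed curve of length below the threshold, and by hypothesis there is a monotone (length-nonincreasing) contraction $H\colon [0,1]\times S^1\to X$ of $\partial\Delta$ to a point. Monotonicity ensures every intermediate loop $H_t$ still has length below $2\pi/\sqrt{K}$, so after refinement each $H_t$ can be subdivided into geodesic bigons/triangles whose perimeters are small enough to lie inside locally CAT($K$) balls. Running the local comparison across successive slices of $H$ and passing to the limit reconstructs the global CAT($K$) inequality for $\Delta$, using crucially that comparison triangles in $M_K^2$ remain well-defined as long as all intermediate loops stay below the threshold.

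The main obstacle in both parts is controlling the patchwork estimate. In (a) the delicate step is that the local inequalities must combine along the refined homotopy without accumulating error terms; this is handled by an inductive doubling-and-comparing argument that works robustly in nonpositive curvature because the model plane has no conjugate points. In (b) the analogous difficulty is more severe: one must guarantee simultaneously that every loop in the contraction fits inside a locally CAT($K$) ball \emph{and} that the comparison triangles in the spherical model stay nondegenerate. This is precisely where monotonicity, rather than mere null-homotopy, becomes indispensable, since it is what preserves the length bound $\le 2\pi/\sqrt{K}$ all the way through the contraction and thereby keeps one inside the regime where the spherical-model comparison continues to make sense.
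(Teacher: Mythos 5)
This lemma is not proved in the paper; it is stated with a citation to Davis--Moussong (and, implicitly for part (b), Bowditch), so there is no in-paper argument to compare against. Your sketch does track the standard literature proofs: for (a) the Alexander--Bishop / Bridson--Haefliger development, which reduces to uniqueness and continuity of geodesics via a subdivided-homotopy (Alexandrov patchwork) argument; for (b) Bowditch's criterion, where the monotone (length-controlled) null-homotopy is exactly what keeps the entire iteration inside the regime where spherical comparison triangles exist. Two small remarks. First, you have silently corrected a typo in the statement: the threshold in part (b) should read $2\pi/\sqrt{K}$, not $2\pi/K$, and you consistently use the correct bound. Second, in part (a) your phrase ``Alexandrov--Reshetnyak patchwork'' slightly conflates two tools: Reshetnyak's gluing theorem is about amalgamating CAT($K$) spaces along convex subsets, whereas the patchwork you actually need is Alexandrov's lemma on concatenating comparison triangles along a subdivided geodesic, combined with the fact that local geodesics in a uniquely-geodesic locally CAT($K$) space are global geodesics. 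These are cosmetic issues; as a high-level sketch of why the local-to-global principle holds, your proposal is sound and matches the intended references.
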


\subsection{Links and the Gromov-Alexandrov lemma} 
Recall that the  \emph{star} and \emph{link} of a face $\sigma$ in a 
simplicial complex $\Sigma$ are the subcomplexes 
\[\St_\sigma \Sigma:= \{\theta; \; {\rm there \; exists}\; \tau\in\Sigma \; {\rm such \; that} \; 
\sigma\subseteq\tau\in \Sigma \; {\rm and}\; \theta \subseteq \tau\}\]
\[\Lk_\sigma\Sigma:= \{\theta\in \St_\sigma \Sigma; \;  \theta \cap \sigma =\emptyset\}\]
Although these definitions of stars and links also make sense for any cell complex, 
we wish to emphasize that the combinatorial definition used below in the 
cubical case is slightly different. 
Specifically let $(P, \leqslant)$ be a poset, namely a partially ordered set. For $x\in P$ we denote by 
$\bigwedge x$ the order ideal and by $\bigvee x$ the filter, namely:
\[\bigwedge x =\{ y\in P;  y\leqslant x\}, \; 
\bigvee x =\{y\in P; \; y\geqslant x\}\]
If $x\in P$ we define the {\em link} of the element $x$ in $P$ to be the poset
\[ \Lk_x P = \bigvee x -\{x\}\]
More generally, if $A\subset P$ is a sub-poset, then we set:
\[ \Lk_AP=\bigcap_{x\in A} \Lk_xP\]
A poset  $P$ is called cubical if each order ideal $\bigwedge x$  is the 
product of the poset $I$ associated to the interval 
\[ I=\{0,1,[0,1]; 0 < [0,1], 1 <[0,1]\}\]
Also recall that a poset is called a lattice if every two elements of it 
have a least upper bound and a greatest lower bound. 

To a cell complex $X$ one associates the {\em face poset} $P(X)$, whose elements are
faces (or cells) of $X$ with respect to the inclusion order relation. One usually 
use $\widehat{P(X)}$ to denote the poset obtained by adjoining one minimal element 
and one maximal element to $P(X)$,  elements which can be thought of as $\emptyset$ and $X$ itself.  
With this definition we see that a cubical complex $X$ is a cell complex whose face poset $P(X)$
is a cubical poset and $\widehat{P(X)}$ is  a lattice, namely every couple of elements has a 
{\em least upper bound} and 
a {\em greatest lower bound}.  
We therefore define for an arbitrary cell complex $X$ and $\sigma$ a face of $X$ 
the combinatorial link as being the poset:  
\[ \Lk_{\sigma} X= \Lk_{\sigma} P(X)\] 
In the simplicial case the combinatorial definition matches the geometric one given above, in the sense that the 
face poset associated to the geometric link coincides with the combinatorial link. 
For notation simplicity we will use the same symbol to denote a simplicial complex and 
the corresponding poset.

If $\Sigma$ is a decomposition of a facewise smooth length space, then $\Lk_\sigma \Sigma$ carries a natural facewise spherical length metric. 

\begin{lem}[Gromov--Alexandrov lemma; cf.\ \cite{BH}]
If $\Sigma$ is a locally finite facewise constant curvature $K$ length space. If the link of every face in $\sigma$ in $\Sigma$ has a CAT(1) link, then $\Sigma$ is locally CAT(K).
\end{lem}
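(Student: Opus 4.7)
The plan is to verify the local CAT($K$) condition pointwise by identifying a small metric ball around each $p\in\Sigma$ with a truncated $K$-cone over the space of directions at $p$, and then invoking the classical spherical join and cone theorems of Berestovskii.

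Let $\sigma$ denote the unique open face whose relative interior contains $p$. Since each face of $\Sigma$ is modeled on the simply connected constant curvature $K$ model space, for $\varepsilon$ small enough the metric ball $B(p,\varepsilon)$ is canonically isometric to the truncated $K$-cone of radius $\varepsilon$ over the space of directions $\Sigma_p$ of $\Sigma$ at $p$. Because $\Sigma$ is a polyhedral length space with $p$ in the interior of $\sigma$, the space of directions decomposes as the spherical join
\[
\Sigma_p \;\cong\; S^{\dim\sigma - 1} \ast \Lk_\sigma\Sigma,
\]
where $S^{\dim\sigma - 1}$ is the unit round sphere of directions tangent to $\sigma$ at $p$, and $\Lk_\sigma\Sigma$ is endowed with its facewise spherical (curvature $1$) metric, whose edge lengths are the dihedral angles between the adjacent faces of $\Sigma$ at $\sigma$.

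By hypothesis $\Lk_\sigma\Sigma$ is CAT($1$), and the round sphere $S^{\dim\sigma - 1}$ is trivially CAT($1$). Berestovskii's spherical join theorem, which asserts that the spherical join of two CAT($1$) spaces is again CAT($1$), then implies that $\Sigma_p$ is CAT($1$). Applying Berestovskii's cone theorem, which asserts that the $K$-cone over a CAT($1$) space is CAT($K$) (within the natural radius $\pi/\sqrt{\max(K,0)}$ when $K>0$), we conclude that $B(p,\varepsilon)$ is CAT($K$). Since $p$ was arbitrary, $\Sigma$ is locally CAT($K$).

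The main obstacle is setting up correctly the local normal form identifying a small ball around $p$ with the truncated $K$-cone over $S^{\dim\sigma - 1} \ast \Lk_\sigma\Sigma$: when $p$ lies on a face of positive codimension, one must glue the geodesic polar coordinates coming from each top-dimensional face incident to $\sigma$ in such a way that the dihedral angles along the faces of $\Lk_\sigma\Sigma$ are precisely reflected as the spherical edge lengths of the join. Once this local normal form is established, the passage from CAT($1$) links to CAT($K$) balls is a formal application of Berestovskii's two results, and local finiteness of $\Sigma$ ensures that $\varepsilon$ can be chosen uniformly near each point so that the cone model is genuinely isometric to the ball.
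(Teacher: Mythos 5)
Your proof is correct and is exactly the standard Bridson--Haefliger argument that the paper's ``cf.\ \cite{BH}'' gestures at: identify a small ball around $p$ with a truncated $\kappa$-cone over the space of directions $\Sigma_p$, decompose $\Sigma_p$ as the spherical join $S^{\dim\sigma-1}\ast\Lk_\sigma\Sigma$, and apply Berestovskii's join and cone theorems. One small remark: because the lemma as stated assumes every face link (not merely every vertex link) is CAT(1), you are able to conclude directly at each point without the dimension induction that BH uses to bootstrap from vertex links to all face links, so your version is in fact a touch more streamlined than the reference while relying on the same two theorems.
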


The piecewise flat metric on a cube complex is the length metric 
obtained by endowing each cube with a metric making it  
isometric with an unit Euclidean cube.  

In the case of cubical complexes the Gromov criterion from above reads:

\begin{lem}[Gromov--criterion for cubical complexes; cf.\ \cite{BH}]\label{flag}
The piecewise flat metric on a locally finite cube complex 
$\Sigma$ is locally CAT(0) if and only if the link of every vertex of 
$\Sigma$ is flag. 
\end{lem}

A cube complex which has every vertex link a simplicial flag complex.
will be called {\em  nonpositively curved}.

\subsection{Comparison CAT(0) metrics on collapsible complexes}

The following is a slight improvement of Theorem \ref{thm:CAT2}. 

\begin{thm}\label{thm:collcat}
Let $C$ be any finite dimensional collapsible simplicial complex. 
Then there exists a cubical complex $C'$ that is PL homeomorphic to $C$ such that the piecewise flat metric on $C'$ is CAT(0).
 
Let $C=\bigcup_{n=1}^{\infty} C_n$ be the ascending union of 
subcomplexes obtained one from another by (possibly infinitely many) disjoint 
elementary expansions (reverse collapses):  
\[   \{\text{point}\}=C_0\nearrow C_1\nearrow \cdots C_{n-1}\nearrow C_n\nearrow \cdots C\]
Then  $C_{n}$ are convex subsets of $C$, when the later is endowed with the CAT(0) metric 
obtained by pullback from $C'$.
\end{thm}

\begin{proof}
The proof is by a simple induction, constructing the desired facewise hyperbolic CAT(0)-metric along elementary expansions. The induction step is provided by the following result:

\begin{prop}\label{extension}
Let $X$ be a finite simply connected nonpositively curved cube complex. Let $\Gamma$ be a PL $k$-disk which is a subcomplex of $X$. Then there exists: 
\begin{enumerate}
\item a cubical complex $X'$ which contains $X$ as a subcomplex. 
\item a cubical complex $\Delta$ which is a PL $(k+1)$-disk containing the subcomplex 
$\Gamma$ in its boundary $\partial \Delta$
\end{enumerate}
such that  the cubical complex $X'= X\cup_{\Gamma} \Delta$ obtained by gluing  $\Delta$ 
to $X$ along $\Gamma$ is  nonpositively curved. 
\end{prop}

\end{proof}

\subsection{Extensions of nonpositively curved complexes}

Let $X'= X \cup_{\Gamma} \Delta$, where $\Delta = \Gamma\times [0,1]$. 
Our Proposition \ref{extension} would immediately follow if the links of vertices of $\Delta$ within  
$X$ were simplicial flag complexes. 
However, this  might not be true and to achieve it we should allow the cubical complex 
$\Delta$ be suitably modified, while preserving the condition 
that $\Delta$ is a PL $(k+1)$-disk containing $\Gamma$. 

To this purpose, we check the flag condition at each vertex of $\Gamma$.  
If it is not satisfied at some vertex $v$, then we define a sequence of modifications 
eventually producing a new simplicial complex 
$L'_v$  out of the link $\Lk_vX$, such that $L'_v$ is now flag. 
The final step  is to prove that there is some finite 
cube complex $\Delta$ with the property that 
$\Lk_v X'$ is isomorphic to $L'_v$, for any vertex $v$ of $\Gamma$ and 
flag for all the other vertices of $\Delta$. 

In order to construct $\Delta$ out of the links $L'_v$, for $v$ vertex of $\Gamma$, let us introduce more notation. We call an {\em extension} of a simplicial/cubical complex $S$ to be any simplicial/cubical complex containing it. 
We are concerned with the following problem: given a cubical complex $X$ and 
a collection of simplicial complexes $L_{\sigma}, \sigma\in X$, whether 
there exists an extension $X'$ of $X$ such that $\Lk_{\sigma}X'=L_{\sigma}$, for any $\sigma$? 

To this purpose we define a {\em local extension} of $X$ to be 
a collection  $(L_{\sigma}, \sigma\in X)$ of {\em extensions} 
of the links $\Lk_{\sigma}X$ of all proper faces 
$\sigma$ of $X$. This means that the embedding of $\Lk_{\sigma}X$ into $L_{\sigma}$ is part of the data. Moreover, for any 
faces $\tau\subseteq \sigma$ of $X$, we have defined simplicial embeddings maps 
which will be assimilated to inclusions:   
\[ L_{\sigma}\hookrightarrow L_{\tau}, \; {\rm if}\; \tau\subset \sigma.\] 

If $\tau\subset \sigma$ are faces of $X$ and $X'$ is any extension of $X$, 
then $\Lk_{\tau}\sigma$ has a natural embedding in $\Lk_{\tau}X$: 
the poset $\Lk_{\tau}\sigma$ is the poset associated to the face 
of $\Lk_{\tau}X$ corresponding to $\sigma$. 
Therefore we can see $\Lk_{\tau}\sigma$ as a face of the 
simplicial complex $\Lk_{\tau}X$ and in particular as a face of any extension $L_{\tau}$ of 
 $\Lk_{\tau}X$.

\begin{df}
A local extension $(L_{\sigma}, \sigma\in X)$ of a simplicial/cubical complex $X$ is 
said to be {\em coherent} if it commutes with the passage from faces to links, 
namely for every faces $\tau\subset \sigma$ of $X$ the following coherence equation holds: 
\[ \Lk_{\Lk_{\tau}\sigma} (L_{\tau})=L_{\sigma},\]
where $\Lk_{\tau}\sigma$ is seen as a face of $L_{\tau}$. 
\end{df}

Note that $\Lk_{\tau}\sigma\subset \Lk_{\tau}X$, so that its image into $L_{\tau}$ is 
already part of the data. The coherence equation then says that the image of the inclusion 
$L_{\sigma}\hookrightarrow L_{\tau}$ is uniquely determined by the other data.  

We say that a cubical complex $\Delta$ is a   
{\em combinatorial neighborhood} of $\Gamma$ if every $k$-face of $\Delta$ either 
intersects $\Gamma$ or is contained in a unique minimal face intersecting $\Gamma$. 

To any extension $X'=X\cup_{\Gamma}\Delta$ of $X$ we can associate  
a local extension, called its {\em restriction to $X$} consisting of the collection of links 
$L_{\sigma}=\Lk_{\sigma}X'$, along with the natural embeddings 
$L_{\sigma}X\hookrightarrow L_{\sigma}$ and $L_{\sigma}\hookrightarrow L_{\tau}$, when $\tau\subset \sigma$ are faces of $X$. 
The key ingredient in globalizing a local extension is the following lemma: 

\begin{lem}\label{globalizing}
A local extension of a cubical complex $X$ is coherent if and only if it is the 
restriction of a global extension which is a combinatorial neighborhood of a subcomplex of $X$. 
\end{lem}
\begin{proof}
The coherence is satisfied by any extension $X'$ of $X$, since by the definition of the links we have: 
\[ \Lk_{\Lk_{\tau}\sigma} (\Lk_{\tau}X')=\Lk_{\sigma}X'\]

For the nontrivial implication consider 
a coherent local extension $(L_{\tau},\tau\in X)$. 
Define the cubical cells $C_{\alpha}(\tau)$ of dimension $1+\dim\sigma +\dim \alpha$, where 
$\alpha$ is a face of $L_{\tau}$. For fixed $\tau$ these cubes are glued together such that 
\[ C_{\alpha}(\tau)\cap C_{\beta}(\tau)=C_{\alpha\cap \beta}(\tau), \; {\rm for }\; \alpha,\beta\in L_{\tau},\]
where $C_{\emptyset}(\tau)=\tau$. The resulted complex $S(\tau)$ should be 
the closed star of the face $\tau$ in the hypothetical global extension and its 
underlying space is:  
\[ S(\tau)=\bigcup_{\alpha\subset L_{\tau}} C_{\alpha}(\tau)\]

Recall that for $\tau\subseteq \sigma$ we have an inclusion 
$L_{\sigma}\subseteq L_{\tau}$. However the natural map between the stars 
$S(\sigma)\subseteq S(\tau)$ is not the tautological one. 

It is enough to define the embedding $S(\sigma)\subseteq S(\tau)$ by sending 
$C_{\alpha}(\sigma)$ isometrically onto $C_{\alpha*\Lk_{\tau}\sigma}(\sigma)$, where 
$\*$ denotes the join. Note that the image of $\alpha$ in $L_{\tau}$ 
is contained in $\Lk_{\Lk_{\tau}\sigma}L_{\tau}$, by the coherence equations, so that 
indeed $\alpha*\Lk_{\tau}\sigma$ is a face of $L_{\tau}$.

We then define 
\[ X'= \bigcup_{\tau\in X} S(\tau)/\sim\]
where the equivalence relation $\sim$  identifies  $C_{\alpha}(\tau)$ and 
$C_{\alpha*\Lk_{\tau}\sigma}(\sigma)$ if $\tau\subset \sigma$. 
Here we identified $\alpha\in L_{\sigma}$ to its image $\alpha\in L_{\tau}$. 
This complex is well-defined if and only if we have the following commutative diagrams of inclusions: 
\[ \begin{array}{ccc}
S({\sigma}) & \to & S({\tau_1})\\
\downarrow & & \downarrow \\
S({\tau_1}) & \to & S({\theta})\\
\end{array}
\]
for every triple of faces $\theta \subseteq \tau_i \subseteq \sigma$, $i=1,2$. 
But this is a consequence of the fact that, under the above assumptions we have: 
\[ \Lk_{\theta}\tau_1 * \Lk_{\tau_1}\sigma=\Lk_{\theta}\tau_2*\Lk_{\tau_2}\sigma\]
 
Moreover, the coherence of the local extension insures that after gluing we indeed have 
$\Lk_{\sigma}X'=L_{\sigma}$, as desired. 
Moreover, $X'$ is a combinatorial neighborhood of a subcomplex of $X$, 
by construction. 
\end{proof}

To see how a global extension is constructed out of a local extension consider the following 
example which starts from a genuine extension $X\cup_{\Gamma} \Delta$, where $\Delta$ 
is a product, as in the figure below. Let then $L'_{\sigma}$ be the stellar subdivision of 
$\Lk_{\sigma}X'$. 

\begin{figure}[h!tb]
    \includegraphics[width=5.5cm]{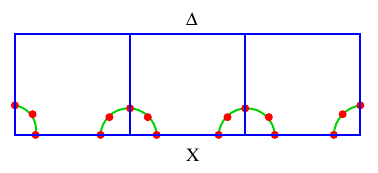}
\caption{A local extension obtained by stellar subdivision of links in an extension}
\end{figure}

Then the collection $(L'_{\sigma}, \sigma\subset X)$ is a coherent local extension of $X$ and 
the global extension associated to it is drawn below:

\begin{figure}[h!tb]
    \includegraphics[width=5.5cm]{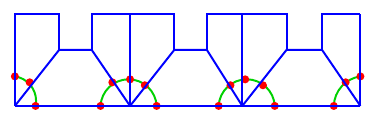}
\caption{The extension obtained from the coherent local extension}
\end{figure}

This construction then can be used for modifying the links of a given extension. Specifically, we have:  

\begin{lem}\label{makeflag}
Assume that the cubical complex $X$ is flag and let $\mathcal Z=(L_{\tau}, \tau\in X)$ be a coherent local extension. 
Then there is another coherent local extension  $\mathcal W=(L'_{\tau}, \tau\in X)$ 
such that:
\begin{enumerate}
\item  every $L'_{\tau}$ is obtained by subdividing faces of $L_{\tau}$  
which are not in $\Lk_{\tau}(X)$;
\item every  link $L'_{\tau}$ is  flag. 
\item the cubical complex $X'$ whose restriction is $\mathcal W=(L'_{\tau}, \tau\in X)$  is a 
combinatorial neighborhood of a subcomplex of $X$. Moreover, every vertex link of $X'$ is also flag.   
\end{enumerate}
\end{lem}
By the previous Lemma \ref{globalizing} it suffices to consider that $\mathcal Z$ is the restriction of 
an extension $X\cup_{\Gamma} \Delta$ by some combinatorial neighborhood of a
cubical subcomplex $\Gamma$. 
For sake of simplicity we can assume that $\Delta=\Gamma\times [0,1]$
which is a combinatorial neighborhood of $\Gamma$.

Let $v$ be a vertex of $\Gamma$ and $\Lk_v X$ be the link of $v$ within $X$. 
As $\Delta$ is a combinatorial neighborhood of $\Gamma$, it follows that 
the link $\Lk_v X$ is the union 
\[C(\Lk_v\Gamma)\cup_{\Lk_v\Gamma} \Lk_v X.\] 
Here $C(\Lk_v\Gamma)$ denotes the cone over $\Lk_v \Gamma$.
Faces of $C(\Lk_v\Gamma)$ containing the cone vertex will be called {\em vertical}. 

If all links   $\Lk_v X$ are flag, we are done. Suppose that there exists some 
link  $\Lk_v X$ which is not flag. This means that there exists at least one 
{\em missing face}, namely a $n$-dimensional 
simplex $\sigma\not\in \Lk_v X$ whose boundary  $\partial \sigma$ is contained 
in  $\Lk_v(X)$. As $\Lk_v X$ is flag, it follows that $\sigma$ has its vertices in the cone 
$C(\Lk_v\Gamma)$, so that at least one face of $\partial \sigma$ is vertical.

We now stellar subdivide all vertical faces of $C(\Lk_v\Gamma)$ in decreasing order of their dimensions, without touching faces of $\Lk_vX$. 

One single subdivision is enough to get rid of the missing face $\sigma$. 
However, if we stop here we might obtain new vertices in the corresponding extension which 
have not flag links. In the picture below we have a vertex $v$ of a square 2-cell of $\Gamma$, 
which is a facet of the 3-cube $\Box$ of $\Delta$. The link $\Lk_v\Box\subset \Lk_v\Delta$   
is a (spherical) 2-simplex $\sigma$. Suppose that we aim at 
stellar subdividing this 2-simplex, as in figure \ref{stellar1}.

\begin{figure}[h!tb]
    \includegraphics[width=5.5cm]{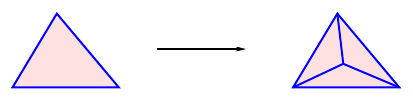}
\caption{One stellar subdivision of a face}\label{stellar1}
\end{figure}
Then the 3-cube $\Box$ from $\St_v\Delta$ is replaced by a 
complex  $Q(\Box,v)$ consisting of the union of three 
3-cubes as in figure \ref{stellar2}.

\begin{figure}[h!tb]
    \includegraphics[width=8.5cm]{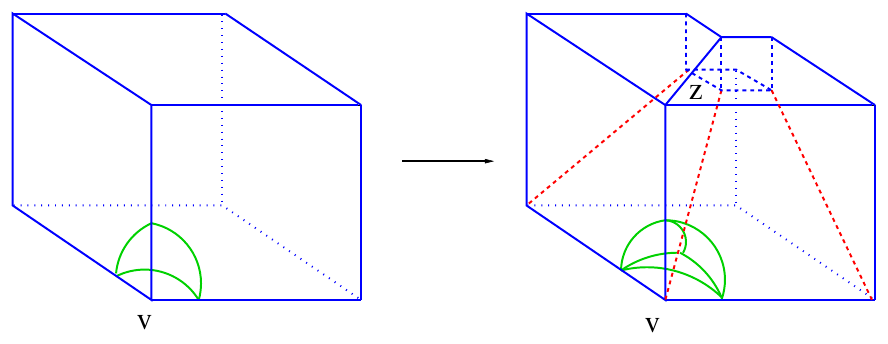}
\caption{One stellar subdivision might create a new vertex of $S(v)$ whose link is not flag}\label{stellar2}
\end{figure}

We can see from the picture that the link of the newly created vertex $z$ of $Q(\Box,v)$ is not flag. 
To remedy that we  have to continue stellar subdividing all cells of $\sigma$ 
which are not cells of $\Lk_vX$, in decreasing order of their dimensions.

Specifically, in the situation above we should continue to subdivide $\sigma$ as indicated 
on figure \ref{stellar3}: 

\begin{figure}[h!tb]
    \includegraphics[width=5.5cm]{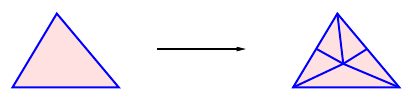}
\caption{Iterated stellar subdivision of a face}\label{stellar3}
\end{figure}

In this picture the horizontal edge of the 2-simplex  $\sigma$ in  $\Lk_v\Delta$ belongs also 
to $\Lk_v X$ and hence it will be left untouched. 
Therefore the 3-cube $\Box$ from $\St_v\Delta$ is now replaced by the 
subcomplex $Q(\Box,v)$ of $S(v)$ consisting of the union of five 3-cubes as in  figure 
\ref{stellar4}.

\begin{figure}[h!tb]
    \includegraphics[width=8.5cm]{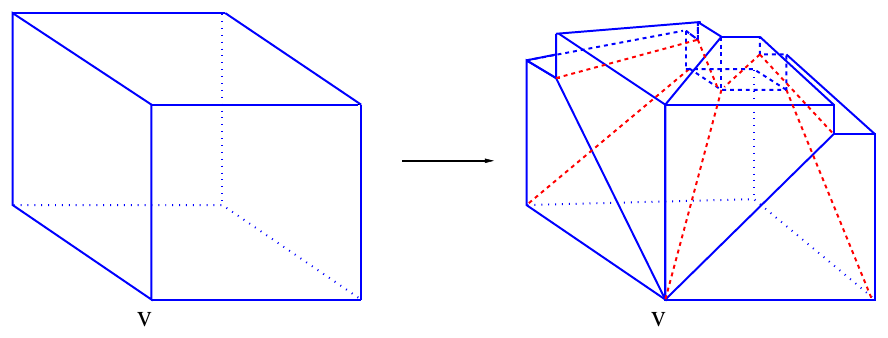}
\caption{Modification of a cube induced by an iterated stellar subdivision}\label{stellar4}
\end{figure}

Let $L'_{\tau}$ be the result of stellar subdividing all vertical cells of $L_{\tau}$, as above.  
As $L_{\tau}$ was coherent  the  collection $(L'_{\tau}, \tau\in X)$ 
is a coherent local extension.  
Furthermore, all links $L'_{\tau}$ are now flag, by construction. 
By Lemma \ref{globalizing} this local extension is the restriction of 
an extension $X'=X\cup_{\Gamma}\Delta$. 
Here $\Delta$ is a combinatorial neighborhood of $\Gamma$, in particular it is a 
PL disk.

One obtains the star $S(v)=\St_vX'$, of $v\in\Gamma$  by  
replacing  the initial star $\St_v \Gamma\times [0,1]$ with the union  
of cubical complexes $Q(\Box, v)$, 
over all top dimensional cell $\Box$ of $\Gamma\times [0,1]$. 
Recall that $Q(\Box,v)$ is simply the union of all cubes $C_{\alpha}(\Box\cap \Gamma)$,  
$\alpha\in L'_{v}\cap \Box$ associated to the iterated stellar subdivision of the simplex 
$\Lk_v\Box$.

We write $\Box^n$ if we want to specify the dimension of $\Box$.  
We will describe $Q(\Box^n,v)$ by induction of the dimension $n$. 
Let $H^n=\Box^n- \frac{1}{4}\Box^n$, where the two cubes are parallel and have 
the vertex $\overline{v}$ opposite to $v$ which is common to both of them.  
Then $Q(\Box^2,v)$ is the subdivision of $H^2$ into two squares, by the diagonal 
connecting $v$ to the vertex $z$ of the smaller cube, which is opposite to $\overline{v}$. 
Assume now that $Q(\Box^{n-1},v)$ is already defined. 
Then $H^n$ has $n$ facets passing through $z$, one of them being horizontal, 
namely parallel to $\Box^n\cap \Gamma$. Note that $H^n$ has also a natural 
intermediary subdivision into 
$n$ cubes, each cube being associated to a facet of $H^n$ passing through $z$ and one
parallel facet of $\Box^n$ passing through $v$.     
Each such facet is isometric to $\frac{1}{4}\Box^{n-1}$ and contains the subcomplex 
$\frac{1}{4}Q(\Box^{n-1},z)$. We consider the $n-1$ prisms: 
\[ T_i^n=\frac{1}{4}\left(\Box^{n-1}-Q(\Box^{n-1},z)\right)\times \left[0 \leq t_i \leq \frac{3}{4}\right]\subset H^n\]
associated to the horizontal coordinates $t_i, 1\leq i\leq n-1$. 
We then see that $Q(\Box^n,v)=H^n-\cup_{i=1}^{n-1} T_i$. 
Every cube from the intermediary subdivision of $H^n$ 
except the one which is associated to the horizontal facets now inherits 
a subdivision in $n$-dimensional cubes obtained by taking the product 
of the subdivision of $Q(\Box^{n-1},v)$ into $(n-1)$-dimensional cubes with an interval.    
There is only one cube from the intermediary subdivision which 
survives as a cube of $Q(\Box^{n},v)$, the horizontal cube determined by the facet 
$\Box\cap \Gamma$ and the horizontal (not subdivided) facet of $\frac{1}{4}\Box^n$.  

The new vertices of $Q(\Box^n,v)$, i.e. those which are off $X$, are of two types: those coming from 
a product of the type $Q(\Box^{n-1},v)\times \left[0,\frac{3}{4}\right]$ and $z$. 
The links of the first type vertices are cones over the corresponding links in $Q(\Box^{n-1},v)$. 
By induction on $n$ they are flag simplicial complexes. 
Eventually, the link of $z$ is just the boundary of a $n$-dimensional simplex without one facet, and hence it is a flag complex again. 

Further we have to glue together $Q(\Box_1^n,v)$ and $Q(\Box_2^n,v)$, where 
$\Box_1$ are two adjacent $n$-cubes containing $v$. The new links 
of common vertices are just unions of the former links within  
$Q(\Box_1^n,v)$ and $Q(\Box_2^n,v)$ respectively, along a common facet which separates and 
hence they remain flag. The star $S(v)$ is the union of complexes $Q(\Box^n,v)$ 
over all cubes of $\Gamma\times[0,1]$ containing $v$. Using induction on their number and 
finally the fact that $\Lk_v\Gamma$ is flag, we obtain that the vertices of 
$S(v)$ have flag links.  

The vertices of $S(v)$ are either of old vertices of $X$, when they belong to $\Gamma$, or 
new vertices, which belong to $X'$ but not to $X$.

From the description given above for $Q(\Box,v)$ we see that 
every new vertex is connected by an edge to an old vertex.

Eventually, to obtain the complex $X'$ we have to identify some cubes of 
$S(v)$ and $S(w)$ if $v$ and $w$ belong to the same cell $\tau$ 
of $\Gamma$. More precisely, we have to identify the cubes 
$C_{\alpha*\Lk_v\tau}(v)$ and $C_{\alpha*\Lk_w\tau}(w)$ and both of them with 
$C_{\alpha}(\tau)$, for any simplex $\alpha\in L'_{\tau}$. 
In our case, this means that the only identifications needed when we glue together 
the stars $S(v)$ are as follows. 
Let $v_{\underline j}$,  for $\underline{j}\in \{0,1\}^{n-1}$ be the vertices of 
some horizontal facet $\Box^n\cap \Gamma$. Then in $X'$ one should identify   
with each other the horizontal $n$-cubes in $S(v_{\underline{j}})$. 
The link of a vertex in $X'$ obtained after identification of two or more cubes 
is then a common subdivision of the former links of the vertices in 
their respective stars $S(v_{\underline{j}})$. Therefore the links of new 
vertices in $X'$ remain flag after gluing and identifying cubes of stars of vertices 
which belong to a cell. 
This implies that all vertices of the resulting cube complex $X'$ have flag links.

\begin{proof}[Proof of Proposition \ref{extension}] 
The result is a direct consequence of Lemmas \ref{globalizing}, \ref{makeflag} and 
Gromov-Alexandrov's Lemma \ref{flag}. 
\end{proof}

\subsection{Proof of Theorem \ref{thm:collcat}}  
Consider a sequence of  reverse collapses $C_{n-1}\nearrow C_{n}$, $C_0=\{point\}$, such that 
$C=\bigcup_{n=1}^{\infty} C_n$. 
Assume that we constructed  CAT(0) cubical complexes $C'_i$, $0\leq i \leq n$
such that $C'_i$ are PL homeomorphic to $C_i$ and moreover $C'_{j}$ are convex in $C'_i$, 
for $j\leq i$.  

Consider the next  elementary expansion $C_{n} \nearrow C_{n+1}$, where $\delta$ 
denotes the $k$-cell attached and set $\gamma=\delta \cap  C_{n}$.  
As $\gamma$ is a PL $(k-1)$-disk, its image $\Gamma$ within $C'_n$ by the 
PL homeomorphism $C_n\to C'_n$ is also a PL $(k-1)$-disk. 
By Proposition \ref{extension} there is a PL $k$-disk $\Delta$ 
containing $\Gamma$ embedded within $\partial \Delta$   
such that $C'_{n+1}=C'_n\cup_{\Gamma} \Delta$ is a CAT(0) cubical complex.  
It then follows that the PL homeomorphism $C'_n\to C_n$ extends 
to $C'_{n+1}\to C_{n+1}$. Then the claim follows by induction on $n$. 

To make sure that $C'_{n}$ is convex within $C'_{n+1}$, we first note that 
any cubical cell of $C'_{n+1}$ intersects $C'_n$ along a face. 
Therefore $C'_n$ is locally convex, with respect to the piecewise flat metric
on $C'_{n+1}$. Moreover $C'_n$ is connected and it is well-known that 
a connected simply connected  locally convex  subcomplex of a 
CAT(0) cubical complex is convex, e.g. as a consequence of 
(\cite{BH}, Prop II.4.41).  
Eventually $C_n$ is convex in $C$ as well, as the CAT(0) metric on $C_n$ won't 
be changed when we adjoin new cells.

\begin{rem}
When $C$ is infinite, one might allow, instead of a single 
elementary expansion to step from $C_n$ to $C_{n+1}$,  
infinitely many simultaneous elementary expansions 
having disjoint attaching maps. We obtain the 
convexity of $C_n$ within $C$ by the same arguments. 
\end{rem}

\section{Variations}\label{variations}
\subsection{More tameness conditions}

\begin{df}
An open manifold $M$ is {\em weakly geometrically $k$-connected} (see \cite{FG}) if $M=\cup_{j=1}^{\infty} K_j$, where 
$K_j\subset {\rm int}(K_{j+1})$, for $j\geq 1$, is an exhaustion by compact $k$-connected PL manifolds. 
When $k=\infty$ we use the term  weak geometric contractibility.  
\end{df}

It is obvious that CAT(0) polyhedra are  weakly geometrically contractible. It suffices to consider any exhaustion by metric balls, which are convex. To guarantee the filtration is a filtration by manifolds, one merely has to to pass to the regular neighborhoods of these geometric balls to obtain the desired filtration.

\begin{df}
An end is of type $F_k$ (respectively $F$) if it admits arbitrarily small clean neighborhoods with the homotopy type of a CW complex having finite $k$-skeleton (respectively finitely many cells).  
\end{df}
This generalizes the Tucker condition explored in \cite{MT} which requires  that 
the complement of any compact subpolyhedron has finitely generated fundamental group, i.e.\ is of type $F_1$.


 \subsection{Weak geometric contractibility is not sufficient}
The aim of this section is to construct examples of 
open weakly geometrically contractible manifolds which are neither semistable nor with end of type $F_1$. 

\begin{df}
An open manifold $W$ has {\em injective} ends if it admits 
an ascending compact exhaustion by submanifolds $K_j$ 
with the property that the maps induced by inclusions 
$\pi_1(\partial_* K_j)\to \pi_1(K_{j+1}-{\rm int}(K_j))$
and $\pi_1(\partial_* K_{j+1})\to \pi_1(K_{j+1}-{\rm int}(K_j))$ 
are injective. Here $\partial_*K$ denotes an arbitrary 
connected component of $\partial K$. 
The ends of $W$ are {\em strictly injective} if none of the maps 
above are surjective. 
\end{df}

It is well-known (see e.g. \cite{Geogh}, \cite{Guilb13}, ex.4.17) that: 
\begin{lem}
An open manifold with strictly injective ends is not semistable. 
\end{lem}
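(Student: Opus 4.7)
The plan is to prove the contrapositive constructively: under the strict--injectivity hypothesis I will exhibit a compact exhaustion along which the bonding maps in the fundamental pro--group at the end have \emph{strictly decreasing} images, which is incompatible with semistability. Fix a proper base ray $\gamma$ tending to the end, and recall that the end is semistable if and only if for every $j$ there exists $k_0\ge j$ such that for every $k\ge k_0$ the images
\[
\mathrm{im}\bigl(\pi_1(M\setminus K_k,\gamma)\longrightarrow \pi_1(M\setminus K_j,\gamma)\bigr)
\]
agree. I will rule this out.

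First, after passing to a cofinal subsequence I would arrange that the components $\partial_*K_i$ relevant to the chosen end lie in a single connected component of $\overline{M\setminus K_i}$, so that van Kampen applies cleanly to the slicing $\overline{M\setminus K_j}=\bigcup_{i\ge j}\overline{K_{i+1}\setminus \mathrm{int}(K_i)}$. By the injectivity half of strict injectivity, this slicing endows $\pi_1(\overline{M\setminus K_j},\gamma)$ with a graph--of--groups structure (a \emph{tree} of groups, in fact, since the interfaces $\partial_* K_i$ separate) whose edge inclusions are injective. Bass--Serre theory then embeds each vertex group $\pi_1(\overline{K_{i+1}\setminus\mathrm{int}(K_i)})$ into $\pi_1(\overline{M\setminus K_j},\gamma)$.

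Second, I would use the non--surjectivity half of strict injectivity to show that the bonding images strictly decrease in $k$. For each $k>j$, pick an element $g_k\in\pi_1(\overline{K_{k+1}\setminus\mathrm{int}(K_k)})$ outside the image of $\pi_1(\partial_*K_{k+1})$. Because the Bass--Serre decomposition of $\pi_1(M\setminus K_j)$ amalgamates the vertex $\pi_1(\overline{K_{k+1}\setminus\mathrm{int}(K_k)})$ to the ``outside'' part supported past $K_{k+1}$ exactly along the edge group $\pi_1(\partial_*K_{k+1})$, the normal--form theorem forces the image of $g_k$ in $\pi_1(M\setminus K_j,\gamma)$ to lie outside the image of $\pi_1(M\setminus K_{k+1},\gamma)$; on the other hand $g_k$ visibly lies in the image of $\pi_1(M\setminus K_k,\gamma)$. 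This exhibits a strict drop between consecutive images, so stabilization fails and the end is not semistable.

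The main obstacle, as I see it, is the bookkeeping around multiple boundary components and basepoints: strict injectivity is stated component--wise, while semistability is defined ray--wise, so one must first single out the sub tree of groups corresponding to the unbounded component of $\overline{M\setminus K_j}$ containing the tail of $\gamma$ and verify that strict injectivity persists in that subtree (an immediate consequence of the definition, but one that should be stated explicitly). A secondary nuisance is ensuring that one can, after refining the exhaustion, always locate a loop $g_k$ of the required type arbitrarily deep in the end; this is automatic since the original exhaustion already witnesses non--surjectivity at every level. Once these points are set up, what remains is a standard Bass--Serre normal--form calculation in the spirit of the references \cite{Geogh} and \cite{Guilb13} cited in the statement.
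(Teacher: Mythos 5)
The paper offers no argument of its own for this lemma, citing \cite{Geogh} and \cite{Guilb13}, ex.~4.17. Your proof is the standard Bass--Serre argument that those references use: the injectivity hypothesis makes each truncation $\pi_1(\overline{M\setminus K_j})$ a tree of groups with injective edge maps, the normal form theorem forces $\pi_1(\overline{K_{k+1}\setminus K_j})\cap\pi_1(\overline{M\setminus K_{k+1}})=\pi_1(\partial_* K_{k+1})$ inside $\pi_1(\overline{M\setminus K_j})$, and non-surjectivity then produces an element witnessing a strict drop of pro-images between consecutive stages, contradicting the Mittag--Leffler characterization of semistability. Your caveats about singling out the component of $\overline{M\setminus K_j}$ containing the tail of the base ray, and about possibly disconnected $\partial K_i$ (which would give genuine cycles in the graph of groups rather than a tree), are the right ones to flag; in the paper's actual application the exhaustion is by compact contractible manifolds with connected boundary, so the tree picture holds on the nose there. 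Overall the proof is correct and is essentially what the cited sources say.
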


Our goal now is to construct geometrically contractible manifolds 
with strictly injective ends. To this purpose we introduce more terminology. 
 
 We say that the nontrivial pair $H\subset G$ of finitely presented groups  is {\em tight} if the normal closure 
 of $H$ within $G$ is $H$ itself, i.e.\ there is no proper normal subgroup of $G$ containing $H$.
 The pair is nontrivial if $H\subset G$ is proper.  
The group $G$ is {\em superperfect} if $H_1(G)=H_2(G)=0$. Observe that  $G$ is superperfect 
if and only if  $G=\pi_1(K)$, where $K$ is a finite complex 
whose integral homology is that of a point. 

\begin{lem}
Given a  nontrivial tight pair $H\subset G$ of superperfect finitely presented groups, there exists an open 
geometrically contractible manifold $W$ with a contractible compact exhaustion $K_j$ such that  the maps
$\pi_1(\partial_* K_j)\to \pi_1(K_{j+1}-{\rm int}(K_j))$
and $\pi_1(\partial_* K_{j+1})\to \pi_1(K_{j+1}-{\rm int}(K_j))$ 
are given by the proper inclusions $H\subset G$.  
\end{lem}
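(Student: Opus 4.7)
The plan is to build $W$ as the increasing union of compact contractible manifolds $K_j$ glued together by identical ``brick'' cobordisms realising the tight pair $H \subset G$ at every stage. A preliminary observation is that weak acyclicity of $H$ forces superperfection: Hopf's formula, applied to a finite acyclic complex witnessing $\pi_1 = H$, yields $H_2(H) = 0$ in addition to $H_1(H) = 0$, putting $H$ on the same footing as $G$. Kervaire's realisation theorem then produces, for our fixed $n \geq 6$, homology $(n-1)$-spheres $\Sigma_H$ and $\Sigma_G$ with fundamental groups $H$ and $G$, each bounding a compact contractible $n$-manifold, $N_H$ and $N_G$ respectively.

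The geometric heart of the proof is the construction of a \emph{brick} $B$: a compact $n$-manifold with $\partial B = \Sigma_H \sqcup \Sigma_H$, $\pi_1(B) = G$, both boundary inclusions realising the prescribed tight embedding $H \hookrightarrow G$, and with the homology of $S^{n-1}$ (that is, $H_0(B) = H_{n-1}(B) = \Z$ and vanishing in between). To build $B$ I would fix a balanced presentation of $G$ extending a balanced presentation of $H$ (available by superperfection), and start from $\Sigma_H \times [0,1]$. Attaching one-handles and two-handles to the top boundary along the attaching data of the additional generators and relators produces a cobordism $B_0$ from $\Sigma_H$ to a closed manifold $M$ with $\pi_1(M) = G$. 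Setting $B := B_0 \cup_M B_0^*$ where $B_0^*$ is an oppositely-oriented copy, van Kampen delivers $\pi_1(B) = G *_G G = G$ by identity amalgamation along $M$, while a Mayer--Vietoris computation, using Poincar\'e duality on $M$ and superperfection of $G$ to annihilate the middle-degree homology, gives $H_*(B) = H_*(S^{n-1})$.

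Given the brick, I set $K_0 := N_H$ and iteratively $K_{j+1} := K_j \cup_{\partial K_j} B_j$ for a fresh copy $B_j$ of $B$ at each stage. Van Kampen then computes
\[
\pi_1(K_{j+1}) \ = \ 1 *_H G \ = \ G/\langle\langle H \rangle\rangle \ = \ 1,
\]
where the last equality is \emph{precisely} the tightness hypothesis on the pair $H \subset G$. Mayer--Vietoris, using $K_j$ contractible, $B_j$ homologically a sphere, and $\partial K_j = \Sigma_H$ a homology sphere, then forces $K_{j+1}$ to be acyclic, and hence contractible by Hurewicz and Whitehead in the range $n \geq 6$. The union $W := \bigcup_j K_j$ is the desired open geometrically contractible manifold, and by construction each $\pi_1(\partial_* K_j) = H$ injects into $\pi_1(K_{j+1} \setminus \mr{int}(K_j)) = \pi_1(B_j) = G$ as the tight inclusion, as required.

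The main obstacle I anticipate lies in the brick construction, specifically in controlling simultaneously the fundamental group and the homology of $B$. The one- and two-handles attached to $\Sigma_H \times [0,1]$ leave residual middle-degree homology classes in the intermediate slice $M$, and one must either arrange the attaching data to produce a Kervaire-type homology sphere $M = \Sigma_G$ (using cancellations that superperfection of $G$ makes numerically possible), or rely on the doubling to wash these classes out via Mayer--Vietoris. Tightness of $H \subset G$ enters exactly once, in collapsing $\pi_1$ at each amalgamation; it is the irreplaceable hypothesis that makes the telescope actually contractible rather than merely acyclic.
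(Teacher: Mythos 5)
Your telescope-of-bricks skeleton is the same shape as the paper's argument (exhaust $W$ by compact contractible pieces, glue by homology-cylinder cobordisms realizing $H\hookrightarrow G$, invoke tightness via van Kampen to keep each stage simply connected, Mayer--Vietoris for acyclicity), but the way you try to build the brick is genuinely different from the paper, and this is precisely where the proposal has a gap you yourself flag but do not close.

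The paper sidesteps the entire handle-theoretic difficulty by using the \emph{weak acyclicity} of $H$ geometrically, not just as a source of superperfection. It takes a finite acyclic complex $K$ with $\pi_1(K)=H$, embeds two disjoint copies into the Kervaire homology sphere $\Sigma^n$ with $\pi_1(\Sigma)=G$ in codimension $\ge k+1$ (so $n\ge 2k+1$), and sets $X=\Sigma\setminus\mathrm{int}(N_1\sqcup N_2)$. Because $K$ is acyclic, each regular neighborhood $N_i$ is a homology ball and $\partial N_i$ a homology sphere, so $X$ automatically has the homology of $S^{n-1}\times[0,1]$ by duality and excision; because the codimension is $\ge 3$, general position gives $\pi_1(X)\cong\pi_1(\Sigma)=G$ and $\pi_1(\partial N_i)\cong\pi_1(N_i)=H$, with the boundary inclusion realizing exactly $H\hookrightarrow G$. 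The brick exists with no free choices and no homological bookkeeping.

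In your version there are two concrete failures. First, the parenthetical ``(available by superperfection)'' is wrong: superperfection of $G$ does not give a balanced presentation of $G$, let alone one extending a balanced presentation of $H$. Kervaire's theorem for general finitely presented superperfect groups requires handles of index $3$ as well, not only $1$ and $2$, so your $B_0$ would already not be the cobordism you describe. Second, the doubling step $B:=B_0\cup_M B_0^*$ does not obviously wash out the middle-degree homology created by the $2$-handles: Mayer--Vietoris gives an exact sequence $H_i(M)\to H_i(B_0)\oplus H_i(B_0^*)\to H_i(B)$ and there is no reason for the right-hand group to vanish when $H_2(B_0)\neq 0$; doubling a cobordism is not automatically a homology cobordism. (Relatedly, you would also need to verify that the composite $\pi_1(\Sigma_H)\to\pi_1(B_0)$ is actually the chosen injection $H\hookrightarrow G$ rather than a quotient of it --- the new relators could in principle kill elements of $H$ --- which is again handled for free by the paper's codimension-$\ge3$ embedding argument.) If you resolve these points you are essentially re-proving a relative version of Kervaire's realization theorem, which is a much heavier route than necessary; the observation that weakly acyclic $H$ gives you a finite acyclic complex to embed is the missing idea that makes the brick come for free.

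One small remark: your derivation that weak acyclicity implies superperfection via Hopf's formula is correct, but unnecessary, since the lemma already assumes $H$ is superperfect; the hypothesis of weak acyclicity is there to be used geometrically, as above, not to be spent on recovering a hypothesis you already have.
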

\begin{proof}
A classical result of Kervaire (\cite{Kerv}) states that $G$ is the fundamental group of a 
homology sphere $\Sigma^n$ of dimension $n\geq 5$ if (and only) if $G$ is finitely presented and superperfect. 
Let $H=\pi_1(K)$ be fundamental group of an acyclic $k$-complex, $k\geq 2$. 
Choose $n\geq 2k+1$, in order to be able to embed $K\to \Sigma^n$ such that 
the map induced by inclusion  $\pi_1(K)\to \pi_1(\Sigma^n)$ corresponds to the inclusion 
$H\hookrightarrow G$. Consider two such embeddings $K_1$ and $K_2$, which by transversality could be assumed to be 
disjoint. Let $N_1$ and $N_2$ denote disjoint regular neighborhoods of $K_1$ and $K_2$ within $\Sigma^n$. 

Using general position we derive that $\pi_1(\Sigma-{\rm int}(N_1\sqcup N_2))\cong \pi_1(\Sigma)=G$ and 
$\pi_1(\partial N_i)\cong\pi_1(N_i-K_i)\cong\pi_1(N_i)=H$. 
Moreover, the map $\pi_1(\partial N_i)\to \pi_1(\Sigma)$ induced by the inclusion is identified with the  
embedding $H\hookrightarrow G$. 
If $H$ is weakly acyclic then $X=\Sigma-{\rm int}(N_1\sqcup N_2)$ has the homology of a spherical cylinder. 

Now, since $\partial N_1$ is a homology sphere of dimension at least 4, it bounds a 
compact contractible manifold $M$. 
Then, the result of gluing $M\cup X$ is acyclic and simply connected and hence contractible. 
By recurrence we find that $K_j=M\cup X\cup X\cdots \cup X$, where $X$ occurs $j$-times, is also contractible. 
Therefore the open manifold $W=M\cup X\cup X\cdots$ is geometrically contractible and 
the exhaustion $K_j$ satisfies all the requirements. 
\end{proof}

\begin{lem}
Any finite superperfect group $H$ is contained in a superperfect group $G$ to form a nontrivial tight pair. 
In particular, this is the case for 
the binary icosahedral group  $\langle a, b | a^5=b^3=(ab)^2\rangle\cong SL_2(\mathbb F_5)$, 
$SL(2,\mathbb F_p)$, for odd prime $p$, or more generally 
any finite perfect balanced group.  
\end{lem}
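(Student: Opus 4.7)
My plan is to first sharpen the hypothesis (observing that ``finite weakly acyclic'' already implies superperfect), then construct the required overgroup by the simplest possible device---a diagonal embedding into the direct square---and finally to verify the listed examples.

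First, for a finite $H = \pi_1(K)$ with $K$ a finite integrally acyclic complex, the Serre five-term exact sequence associated to the universal cover fibration $\widetilde K \to K \to K(H,1)$ reads
\begin{equation*}
H_2(K) \longrightarrow H_2(H) \longrightarrow H_1(\widetilde K)_H \longrightarrow H_1(K) \longrightarrow H_1(H) \longrightarrow 0.
\end{equation*}
Since $\widetilde K$ is simply connected and $K$ is acyclic, the four outer terms $H_2(K)$, $H_1(\widetilde K)_H$, $H_1(K)$ and (as a quotient of $H_1(K)$) also $H_1(H)$ all vanish, which by exactness at $H_2(H)$ forces $H_2(H) = 0$. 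Thus $H$ is superperfect.

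Next I take $G := H \times H$ and embed $H$ diagonally as $\Delta H = \{(h,h) : h \in H\}$, giving a proper inclusion of finitely presented groups. The K\"unneth formula combined with $H_1(H) = H_2(H) = 0$ yields $H_1(G) = H_2(G) = 0$, so $G$ is superperfect. For tightness I compute, for arbitrary $g,h\in H$,
\begin{equation*}
(g,1)(h,h)(g^{-1},1)(h,h)^{-1} \;=\; ([g,h],\,1),
\end{equation*}
so every element $([g,h],1)$ lies in the normal closure $\langle\!\langle \Delta H\rangle\!\rangle$ of $\Delta H$ in $G$. Perfectness of $H$ yields $[H,H] = H$, hence $H \times \{1\} \subseteq \langle\!\langle \Delta H\rangle\!\rangle$; the symmetric computation gives $\{1\}\times H \subseteq \langle\!\langle \Delta H\rangle\!\rangle$, and therefore $\langle\!\langle \Delta H\rangle\!\rangle = G$, as required.

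For the listed examples it suffices to show that every finite perfect balanced group is weakly acyclic. Given such a group $H$ with $n$ generators and $n$ relators, its presentation 2-complex $K$ satisfies $\chi(K)=1$, $H_0(K)=\mathbb{Z}$ and $H_1(K)=H^{\mathrm{ab}}=0$; since $H_2(K)$ is free (as $K$ is $2$-dimensional), its rank equals $\chi(K)-1=0$, so $K$ is integrally acyclic and $H$ is weakly acyclic. The presentation $\langle a,b\mid a^5=b^3=(ab)^2\rangle$ realises the binary icosahedral group $\mathrm{SL}_2(\mathbb{F}_5)$ as a perfect balanced group, and balanced presentations for $\mathrm{SL}_2(\mathbb{F}_p)$ with $p\ge 5$ an odd prime are classical. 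The one genuinely nontrivial step is the normal-closure computation for tightness; the remaining verifications are standard applications of the K\"unneth formula and the Serre five-term exact sequence.
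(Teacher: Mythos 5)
Your proof is correct but takes a genuinely different route from the paper's. The paper embeds $H$ into a finite symplectic group $Sp(2n,\mathbb{F}_q)$ (via a symmetric group), relying on simplicity modulo center of $Sp(2n,\mathbb{F}_q)$ for tightness and on its known Schur multiplier for superperfectness, with Thompson's group $V$ offered as an alternative target. You instead take $G=H\times H$ with the diagonal embedding: you first show via the five-term exact sequence that a weakly acyclic group is itself superperfect, then K\"unneth gives superperfectness of $G$, and the commutator identity $(g,1)(h,h)(g^{-1},1)(h,h)^{-1}=([g,h],1)$ together with perfectness of $H$ shows the normal closure of $\Delta H$ is all of $G$. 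Your construction is more elementary and self-contained (no facts about classical groups or their Schur multipliers are invoked) and it ties the ambient group canonically to $H$, at the small cost of first recording that $H$ is superperfect, a fact the paper leaves implicit. Two side remarks: the paper's definition of ``tight'' contains a typo (``the normal closure of $H$ within $G$ is $H$ itself'' should read ``$G$ itself,'' as the trailing ``i.e.'' clause and the paper's own proof make clear), and your argument addresses the intended reading; and you implicitly correct the range for the $SL_2(\mathbb{F}_p)$ examples to $p\ge 5$, since $SL_2(\mathbb{F}_3)$ is not perfect.
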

\begin{proof}
Any finite group is contained into some $S_n$ which is contained 
into $Sp(2n, \mathbb F_q)$. 
The finite symplectic group $Sp(2n, \mathbb F_2)$ is simple (hence perfect) for $n\geq 4$ and has trivial Schur multiplier so that it is superperfect. 
The finite symplectic groups $PSp(2n,\mathbb F_q)$ are simple for $n\geq 4$ 
and have Schur multiplier $\Z/2\Z$, when $q$ is odd, so that 
 $Sp(2n, \mathbb F_q)$ is the universal central extension of 
  $PSp(2n, \mathbb F_q)$. Therefore it is superperfect. 
Any proper normal subgroup of  $Sp(2n, \mathbb F_q)$ should be 
contained in the center, so that the pair obtained is tight and nontrivial. 

Note that $SL_2(\mathbb F_p)$, for odd prime $p$ are perfect and admit balanced presentations (see \cite{CR}). 
Thus their presentation 2-complexes are acyclic since their Schur multiplier is trivial, by an old theorem of Schur. 

Alternatively any finite group is contained in the Thompson group $V$, which is finitely presented, simple 
and  superperfect (\cite{Kapou}).
Moreover $V$ is non co-Hopfian, as it contains copies of $V$ corresponding to stabilizers 
of dyadic intervals of the circle, when we identify $V$ with a group of piecewise linear dyadic bijections 
of the circle. Therefore these inclusions $V\hookrightarrow V$ are  nontrivial tight pairs of infinite groups. 
 \end{proof}

\begin{rem}
More examples of  superperfect groups are 1-relator torsion-free groups (Lyndon's theorem) and 
perfect finitely presented groups of deficiency zero, 
in particular Higman's groups, whose presentation complexes are acyclic. 
Other finite examples are $SL_2(\mathbb F_{8})$, $SL_2(\mathbb F_{32})$, $SL_2(\mathbb F_{64})$, 
$SL_2(\mathbb F_{27})$, $SL_2(\mathbb F_{5})\times SL_2(\mathbb F_{5})$, 
$\widehat{A_7}$, etc (see \cite{CRKW}). 
More recently the Burger-Mozes examples (see \cite{BMo}) 
of simple finitely presented torsion-free groups acting on products of trees can be written 
as amalgamated product of free groups over free subgroups. A classical theorem of Whitehead 
(\cite{Wh}) states that whenever we have aspherical spaces $X$ and $Y$ such that 
$\pi_1(X\cap Y) \to \pi_1(X)$ and $\pi_1(X\cap Y)\to \pi_1(Y)$ are injective, then 
$X\cup Y$ is aspherical. This proves that Burger-Mozes simple groups are superperfect. 
\end{rem}

\bibliographystyle{plain}

\end{document}